\documentclass{article}
\usepackage{latexsym}
\usepackage{amssymb}
\setlength{\textwidth}{4.5in}
\setlength{\textheight}{7.125in}
\pagestyle{empty}
\setlength{\parskip}{.07in}
\newtheorem{theorem}{Theorem}
\newtheorem{lemma}[theorem]{Lemma}
\newtheorem{corollary}[theorem]{Corollary}
\newtheorem{example}[theorem]{Example}
\newtheorem{technique}[theorem]{TECHNIQUE}
\newtheorem{proposition}[theorem]{Proposition}

\newtheorem{preproof}{{\bf Proof.}}
\renewcommand{\thepreproof}{}
\newenvironment{proof}[1]{\begin{preproof}{\rm
               #1}\hfill{\rule[-0.5mm]{2mm}{2mm}}}{\end{preproof}}

%
%


\def\mthree#1#2#3{\raise .8ex\hbox{
     ${#1_{{\displaystyle #2}_{\displaystyle #3}}}$}}

\def\mfour#1#2#3#4{\raise .02ex\hbox{%
    $#1_{\displaystyle #2}#3_{\displaystyle #4}$}}

\def\mfour#1#2#3#4{\hbox{
     ${#1_{{\displaystyle #2}}#3_{{\displaystyle #4}}}$}}

\def\mfour#1#2#3#4{\raise 1ex\hbox{${#1_{{\displaystyle #2}_{{\displaystyle #3}_{\displaystyle #4}}}}$}}

\def\m#1#2#3{\raise .8ex\hbox{
     ${#1_{{\displaystyle #2}_{\displaystyle #3}}}$}}

\def\x#1{\raise 0.5ex\hbox{
    ${#1}$}}
\def\n#1{\vbox to 3mm{\vspace{0mm}\vfill \hbox to 6.5mm{\hfill
             $#1$\hfill} \vfill }}

\def\m#1#2#3{\raise .8ex\hbox{
     ${#1_{{\displaystyle #2}_{\displaystyle #3}}}$}}

\def\b#1{\vbox to 5mm{\vspace{0mm}\vfill \hbox to 13.1mm{\hfill
             $#1$\hfill} \vfill }}

\def\r#1{\vbox to 8mm{\vspace{0mm}\vfill \hbox to 8mm{\hfill
             $#1$\hfill} \vfill }}
\def\r#1#2{\raise 0.2ex\hbox{
    ${#1_{\displaystyle #2}}$}}

\def\arraystretch{1.3}                 

\pagestyle{plain}

\title{\bf The Four-way Intersection Problem for Latin Squares}
\author{
{\Large P. Adams${}^1$, E. S. Mahmoodian${}^2$\footnote{This research was partially supported by a grant from the INSF.},} \\
{\Large H. Minooei${}^2$, 
 M. Mohammadi Nevisi${}^2$} \\
$^1$ {\small Department of Mathematics, The University of Queensland,}\\
{\small Brisbane 4072, AUSTRALIA}\\
$^2$ {\small Department of Mathematical Sciences, Sharif University of Technology,}\\
{\small P.O. Box 11155--9415, Tehran, I.R. IRAN}\\
}

\date{}
\begin{document}
\maketitle

\begin{abstract}
For $\mu$ given latin squares of order $n$, they have {\sf $k$ intersection}
when they have $k$ identical cells and $n^2-k$ cells with mutually
different entries. For each $n\geq 1$ the set of integers $k$
such that there exist $\mu$ latin squares of order $n$ with $k$
intersection is denoted by $I^{\mu}[n]$. In a paper by P. Adams
et al. (2002), $I^3[n]$ is determined completely.
In this paper we completely determine $I^4[n]$ for $n\geq 16$. 
For $n \le 16$, we find out most of the elements of $I^4[n]$.
\end{abstract}
%
\section{Introduction and Preliminaries}

A {\sf partial latin rectangle}  is an $r \times n$ ($r\leq n$) array such that each cell 
is either empty or consists of a symbol from a set of $n$ distinct symbols (e.g. $\{1,2,\ldots,n\}$),
and that each symbol appears at most once in each row and in each column.
A {\sf latin rectangle} is a partial latin rectangle when all cells are non-empty.
A {\sf (partial) latin square} of order $n$ is an $n \times n$ (partial) latin rectangle.
We assume the set of symbols $\{1,2,\ldots,n\}$ are used in latin squares of order $n$.

A {\sf $\mu$-way latin square} ($\mu \geq 2$) of order $n$ is an
ordered set of $\mu$ latin squares of order $n$ with the following property: 
the $\mu$ entries in cells with the same coordinate are either all the same, or all different.
The cells with the same entries are often called {\sf fixed cells} or {\sf identical cells} interchangeably.
A $\mu$-way latin square has {\sf $k$ intersection} when the number of fixed cells is exactly $k$.
Similarly, $\mu$-way latin rectangle of order $r\times n$ is defined.

For each $n \geq 1$, by  $I^\mu[n]$ we denote the set of all integers $k$ such that there exist
$\mu$-way latin squares of order $n$ with $k$ intersection.
Determining  $I^\mu[n]$ is called {\sf $\mu$-way latin intersection problem}.

The intersection problem has arisen in many combinatorial areas such as latin squares
\cite{MR1874724,MR1125351,Fu},
Steiner triple systems \cite{MR941781}, $m$-cycle systems
\cite{MR1821945}, and design theory
\cite{MR1278954}. For an old survey on intersection problem see
\cite{MR1278954}. The intersection problem for latin squares was
introduced at first by Fu \cite{Fu} for two latin squares.
Later, Fu and Fu \cite{MR1093150} proposed an intersection problem for $3$ latin
squares which is a relaxation of the problem we have defined
above for $\mu=3$. Instead of having entries of
$n^2-k$ cells mutually distinct in all three latin squares, they only
require not all of these entries to be equal. Adams et al. \cite{MR1874724}
have completely determined $I^3[n]$ for $n\geq 1$. There was an error in
\cite{MR1874724}, in showing that $35 \in I^3[8]$. We have found it by a computer program
and shown in Figure \ref{fig:$3$-way latin square-example}.

For $\mu \geq 2$, a {\sf $\mu$-way latin trade of volume $s$} is defined as follows: A group
of $\mu$ partial latin rectangles such that each of them have precisely the same $s$ filled cells.
If cell $(i,j)$ is filled, then its entry is different in all $\mu$ partial latin rectangles. Moreover, for any
relevant $i$, the set of entries of $i^{th}$ row is the same for all $\mu$ partial latin rectangles,
and similarly for relevant columns $j$.  In \cite{MR1904722}, there are some useful results on $\mu$-way
latin trades which we have used for our work.

Note that from each $\mu$-way latin square, by considering all cells which have identical fixed elements
as empty cells, we obtain a $\mu$-way latin trade. We will refer to the set of cells which have fixed elements
as {\sf intersection part} and to its complement as {\sf trade part}.

Given a $\mu$-way latin square (latin trade) ${\cal L}$, its {\sf skeleton} is a binary matrix $S$
where $S_{i,j}$ is $1$ if and only if cell $(i,j)$ is in the intersection part (is an empty cell, respectively).
Denote by $r_k$ and $c_k$ the number of ones in the $k$'th row and $k$'th column of $S$, respectively.
We call $(r_1, r_2, \cdots, r_n)$ and $(c_1, c_2, \cdots, c_n)$ the {\sf row sequence} 
and {\sf column sequence} of ${\cal L}$, respectively.
An example of a $3$-way latin square and its skeleton is given in
Figure \ref{fig:$3$-way latin square-example}.

\begin{figure}[ht]
\begin{tabular}{ccc}

\def\arraystretch{1.4}
\begin{tabular}
{|@{\hspace{7pt}}c@{\hspace{8pt}} |@{\hspace{1pt}}c@{\hspace{1pt}}
|@{\hspace{1pt}}c@{\hspace{1pt}} |@{\hspace{1pt}}c@{\hspace{1pt}}
|@{\hspace{1pt}}c@{\hspace{1pt}} |@{\hspace{1pt}}c@{\hspace{1pt}}
|@{\hspace{1pt}}c@{\hspace{1pt}} |@{\hspace{1pt}}c@{\hspace{1pt}}
|}
\hline
$1$  & $2$  & $3$  & $4$  & $5$  & $6$  & $7$  & $8$  \\
\hline
$2$  & $1$  & $4$  & $3$  & $6$  & $5$  & $8$  & $7$  \\
\hline
$3$  & $4$  & $1$  & $2$  & $7$  & $8$  & $5$  & $6$  \\
\hline
$5$  & $6$  & $7$  & $8$  & \mthree123  & \mthree214  & \mthree341  & \mthree432  \\
\hline
$7$  & $8$  & \mthree265  & \mthree156  & \mthree314  & \mthree421  & \mthree632  & \mthree543  \\
\hline
$6$  & \mthree375  & $8$  & \mthree517  & \mthree241  & \mthree732  & \mthree423  & \mthree154  \\
\hline
$4$  & \mthree537  & \mthree652  & \mthree765  & $8$  & \mthree173  & \mthree216  & \mthree321  \\
\hline
$8$  & \mthree753  & \mthree526  & \mthree671  & \mthree432  & \mthree347  & \mthree164  & \mthree215  \\
\hline
\end{tabular}

&

&

\def\arraystretch{1.4}
\begin{tabular}
{|@{\hspace{7pt}}c@{\hspace{7pt}} |@{\hspace{7pt}}c@{\hspace{7pt}}
|@{\hspace{7pt}}c@{\hspace{7pt}} |@{\hspace{7pt}}c@{\hspace{7pt}}
|@{\hspace{7pt}}c@{\hspace{7pt}} |@{\hspace{7pt}}c@{\hspace{7pt}}
|@{\hspace{7pt}}c@{\hspace{7pt}} |@{\hspace{7pt}}c@{\hspace{7pt}}
|}
\hline
 $1$ & $1$ & $1$ & $1$ & $1$ & $1$ & $1$ & $1$\\ \hline
 $1$ & $1$ & $1$ & $1$ & $1$ & $1$ & $1$ & $1$\\ \hline
 $1$ & $1$ & $1$ & $1$ & $1$ & $1$ & $1$ & $1$\\ \hline
 $1$ & $1$ & $1$ & $1$ & $0$ & $0$ & $0$ & $0$\\ \hline
 $1$ & $1$ & $0$ & $0$ & $0$ & $0$ & $0$ & $0$\\ \hline
 $1$ & $0$ & $1$ & $0$ & $0$ & $0$ & $0$ & $0$\\ \hline
 $1$ & $0$ & $0$ & $0$ & $1$ & $0$ & $0$ & $0$\\ \hline
 $1$ & $0$ & $0$ & $0$ & $0$ & $0$ & $0$ & $0$\\ \hline
\end{tabular}

\end{tabular}
\caption{A $3$-way latin square of order $8$ with $35$ fixed cells and its skeleton}
\label{fig:$3$-way latin square-example}
\end{figure}

Next, we present some old (which are referenced) and new results which are used to determine $I^4[n]$ for $n\geq 1$.

\begin{lemma} {\em \cite{MR1904722}}
\label{lemma2.2.2 Hadi Thesis}
If $\min\{m,n\}\geq \mu$ then there exists a $\mu$-way latin trade of order $m \times n$
of volume $mn$.
\end{lemma}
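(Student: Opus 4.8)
The plan is to give an explicit construction, exploiting the fact that volume $mn$ forces every cell of the $m\times n$ array to be filled; thus each of the $\mu$ partial latin rectangles is actually a full $m\times n$ latin rectangle. After a possible transposition we may take $m\le n$, and then every row of such a rectangle contains all $n$ symbols. Consequently the ``row'' condition in the definition of a $\mu$-way latin trade is automatically satisfied, and the only genuine requirements are (i) that the $\mu$ entries in each cell be pairwise distinct, and (ii) that each column carry the same set of symbols in all $\mu$ rectangles.

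First I would reduce to the case $m\le n$: transposing all $\mu$ rectangles simultaneously converts an $m\times n$ $\mu$-way latin trade into an $n\times m$ one and preserves all three defining properties, so we may assume $\min\{m,n\}=m\ge\mu$. Next I would fix a single arbitrary $m\times n$ latin rectangle $L$, for instance the cyclic one $L(i,j)=(i+j)\bmod n$ on rows $i=0,\dots,m-1$, whose columns automatically have distinct entries because $m\le n$ (the existence of an $m\times n$ latin rectangle with $m\le n$ is in any case classical).

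The key idea is to obtain the remaining $\mu-1$ rectangles from $L$ by cyclic row shifts, leaving the columns and symbols untouched. Define $L_t(i,j)=L\big((i+t-1)\bmod m,\ j\big)$ for $t=1,\dots,\mu$. Permuting the rows of a latin rectangle yields a latin rectangle, so each $L_t$ is valid; moreover each column of $L_t$ is merely a reordering of the corresponding column of $L$, hence carries exactly the same set of symbols. This gives condition (ii) at once, and condition on rows as already noted, so it only remains to verify cell distinctness.

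The main obstacle, and the only place where the hypothesis $\mu\le m$ is actually used, is verifying (i). At a fixed cell $(i,j)$ the $\mu$ entries are $L\big((i+t-1)\bmod m,\ j\big)$ for $t=1,\dots,\mu$, i.e.\ the values appearing in column $j$ of $L$ at the $\mu$ cyclically consecutive rows $i,i+1,\dots,i+\mu-1\pmod m$. Since $L$ is a latin rectangle, column $j$ holds $m$ pairwise distinct symbols, so any $\mu$ of its cells lying in distinct rows carry distinct symbols; and the $\mu$ shifted indices are pairwise distinct precisely because $\mu\le m$. Hence the $\mu$ entries of every cell differ, the three trade conditions hold, and $L_1,\dots,L_\mu$ form a $\mu$-way latin trade of order $m\times n$ and volume $mn$, completing the proof.
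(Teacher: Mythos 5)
Your construction is correct and complete: taking the cyclic rectangle $L(i,j)=(i+j)\bmod n$ with $m\le n$ and letting $L_t$ be the cyclic row shifts of $L$ does satisfy all three trade conditions, and you correctly isolate the single place where $\mu\le\min\{m,n\}$ is needed (distinctness of the $\mu$ shifted row indices modulo $m$, hence distinctness of the $\mu$ entries in each cell). Note that the paper itself states this lemma as a quoted result from its reference and gives no proof, so there is nothing internal to compare against; your circulant argument is the standard self-contained justification and fills that gap correctly.
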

Let ${\cal T}$ be a $\mu$-way latin trade of order $n$.
In the following results, $R_i$ and $C_j$ denote the set of
elements of row $i$ and column $j$ of ${\cal T}$, respectively.

The next lemma is an immediate result from the definition of $\mu$-way latin trades.

\begin{lemma} {\em \cite{MR1904722}}{
\label{lemma2.2.3 Hadi Thesis}
Let ${\cal T}$ have a nonempty cell $(i,j)$.
Then $|R_i\cap C_j|\geq \mu.$
}\end{lemma}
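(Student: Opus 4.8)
The plan is to exhibit $\mu$ distinct symbols that are forced to lie simultaneously in row $i$ and in column $j$, and then observe that such symbols must be counted in $R_i\cap C_j$. First I would fix the given filled cell $(i,j)$, and for each $t\in\{1,\ldots,\mu\}$ let $e_t$ denote the entry occupying cell $(i,j)$ in the $t$-th partial latin square that constitutes ${\cal T}$. By the defining property of a $\mu$-way latin trade, a filled cell receives pairwise different entries across the $\mu$ squares, so $e_1,\ldots,e_\mu$ are $\mu$ distinct symbols.

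Next I would argue that each $e_t$ belongs to both $R_i$ and $C_j$. Since $e_t$ is the entry of cell $(i,j)$ in the $t$-th square, it certainly occurs in row $i$ and in column $j$ of that particular square. The definition of a $\mu$-way latin trade guarantees that the set of entries appearing in row $i$ is the same for all $\mu$ squares --- this common set is exactly $R_i$ --- and likewise that the common set of entries of column $j$ is $C_j$. Hence $e_t\in R_i$ and $e_t\in C_j$ for every $t$.

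Combining the two observations gives $\{e_1,\ldots,e_\mu\}\subseteq R_i\cap C_j$, and since the $e_t$ are pairwise distinct this yields $|R_i\cap C_j|\geq\mu$, as claimed. The argument is essentially a direct unwinding of the definitions, consistent with the remark that this is an immediate consequence of the notion of a $\mu$-way latin trade. The only point that genuinely needs care --- and the one I would state explicitly --- is the invocation of the ``common row-set / common column-set'' clause of the definition: it is precisely this clause that ties the $\mu$ different cell-entries to a single pair of sets $R_i,C_j$, rather than to $\mu$ separate row and column sets, and without it the distinctness of the $e_t$ would not translate into a lower bound on $|R_i\cap C_j|$.
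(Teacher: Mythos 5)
Your proof is correct and is exactly the argument the paper has in mind: the lemma is stated there as ``an immediate result from the definition'' with no written proof, and your unwinding --- the $\mu$ pairwise distinct entries of the filled cell $(i,j)$ all lie in the common row-set $R_i$ and common column-set $C_j$ --- is the intended one-line justification. No gaps.
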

\begin{corollary} {\em \cite{MR1904722}}{
\label{corollary2.2.2 Hadi Thesis}
Let ${\cal T}$ have a nonempty cell $(i,j)$.
If $|R_i|=|C_j|=\mu$ then $R_i=C_j$.
}\end{corollary}
\begin{corollary}{
\label{corollary2.2.3 Hadi Thesis}
Assume $i_1 \not= i_2$ and cells $(i_1, j), (i_2, j)$ are nonempty.
If $|C_j|=\mu+1$ and $|R_{i_1}|=|R_{i_2}|=\mu$ then
$$R_{i_1}\cup R_{i_2}=C_j, \quad |R_{i_1}\cap R_{i_2}|=\mu-1.$$

Similarly, assume that $j_1 \not= j_2$ and cells $(i, j_1), (i, j_2)$ are nonempty.
If $|R_i|=\mu+1$ and $|C_{j_1}|=|C_{j_2}|=\mu$, then
$$C_{j_1}\cup C_{j_2}=R_i, \quad |C_{j_1}\cap C_{j_2}|=\mu-1.$$
}\end{corollary}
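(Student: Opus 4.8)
The plan is to first reduce both set inclusions $R_{i_1}\subseteq C_j$ and $R_{i_2}\subseteq C_j$ to Lemma~\ref{lemma2.2.3 Hadi Thesis}, then isolate the single genuinely new fact that must be established, namely $R_{i_1}\neq R_{i_2}$, and finally read off the two stated conclusions by an elementary set count. The column version will follow from the row version by transposing ${\cal T}$.

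First I would apply Lemma~\ref{lemma2.2.3 Hadi Thesis} to the nonempty cell $(i_1,j)$, obtaining $|R_{i_1}\cap C_j|\geq\mu$. Since $|R_{i_1}|=\mu$ and $R_{i_1}\cap C_j\subseteq R_{i_1}$, this forces $R_{i_1}\cap C_j=R_{i_1}$, i.e. $R_{i_1}\subseteq C_j$; the identical argument applied to $(i_2,j)$ gives $R_{i_2}\subseteq C_j$. Thus $R_{i_1}$ and $R_{i_2}$ are two $\mu$-element subsets of the $(\mu+1)$-element set $C_j$.

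The heart of the argument, and the step I expect to be the main obstacle, is proving $R_{i_1}\neq R_{i_2}$; everything else is bookkeeping. I would argue by contradiction: suppose $R_{i_1}=R_{i_2}=R$. Because $R\subseteq C_j$ with $|C_j|=|R|+1$, there is a unique symbol $c\in C_j\setminus R$. Now I invoke the defining properties of a $\mu$-way latin trade. In each of the $\mu$ component rectangles the entries in column $j$ are pairwise distinct and their set is exactly $C_j$, so $c$ occurs once in column $j$ in each rectangle, giving $\mu$ occurrences of $c$ in column $j$ in total. On the other hand, the entries appearing in cell $(i_1,j)$ lie in $R_{i_1}=R$ and those in $(i_2,j)$ lie in $R_{i_2}=R$, so $c$, being outside $R$, never occurs in these two cells; hence all $\mu$ occurrences are confined to the remaining $|C_j|-2=\mu-1$ filled cells of column $j$. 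But within any single filled cell the $\mu$ entries are pairwise distinct, so $c$ can occur at most once per cell, i.e. at most $\mu-1$ times among those cells. This contradicts the count of $\mu$ occurrences, so $R_{i_1}\neq R_{i_2}$.

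With $R_{i_1}\neq R_{i_2}$ in hand the conclusion is immediate: a $\mu$-subset of the $(\mu+1)$-set $C_j$ omits exactly one element, and two distinct such subsets omit different elements, so their union is all of $C_j$, giving $R_{i_1}\cup R_{i_2}=C_j$; inclusion--exclusion then yields $|R_{i_1}\cap R_{i_2}|=2\mu-(\mu+1)=\mu-1$. Finally, the second (column) statement is obtained by applying the first statement to the transpose of ${\cal T}$, under which rows and columns are interchanged and the roles of the sets $R_i$ and $C_j$ are swapped.
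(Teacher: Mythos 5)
Your proposal is correct and takes essentially the same route as the paper: both reduce to showing $R_{i_1}\neq R_{i_2}$ after obtaining $R_{i_1},R_{i_2}\subseteq C_j$ from Lemma~\ref{lemma2.2.3 Hadi Thesis}, and both derive the contradiction by noting that the element of $C_j$ missing from $R_{i_1}\cup R_{i_2}$ would be confined to at most $\mu-1$ of the $\mu+1$ filled cells of column $j$, while every element of $C_j$ must occupy exactly $\mu$ such cells. Your write-up merely spells out the occurrence count across the $\mu$ rectangles, which the paper leaves implicit.
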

\begin{proof}{
By symmetry, it suffices to show only the first part. According to
Lemma~\ref{lemma2.2.3 Hadi Thesis}
we have $R_{i_1},R_{i_2}\subseteq C_j$. Therefore, there are two possibilities:
$|R_{i_1}\cap R_{i_2}|=\mu-1$, or $|R_{i_1}\cap R_{i_2}|=\mu$ (in which case $R_{i_1} = R_{i_2}$).
We show the latter case is not possible. If the latter case happens then
there exists an element $x \in C_j\setminus (R_{i_1}\cup R_{i_2})$. This means that
$x$ can appear in at most $\mu-1$ cells of column $j$, which is a
contradiction as each element should appear in exactly $\mu$
cells of a row or column.
}\end{proof}

\begin{lemma}
\label{lemma:possible-in-seq}
If an element appears at least $n-\mu+1$ times in the intersection part of a $\mu$-way latin square ${\cal L}$,
then it appears only in the intersection part of ${\cal L}$.
\end{lemma}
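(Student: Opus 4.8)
The plan is to argue by contradiction, extracting the contradiction from the all-different condition inside a single row of the trade. Suppose the element $x$ occupies $f \ge n-\mu+1$ fixed cells but, contrary to the claim, also occurs somewhere in the trade part. Since every fixed cell carries one well-defined entry (all $\mu$ squares agree there), $f$ is exactly the number of fixed cells holding $x$, and in each individual square $L_t$ the symbol $x$ fills precisely $n-f$ cells of the trade part. The hypothesis $f \ge n-\mu+1$ gives $n-f \le \mu-1$, while the assumption that $x$ occurs in the trade part gives $n-f \ge 1$; hence $1 \le n-f \le \mu-1$.

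First I would pin down where $x$ can sit. Because each $L_t$ is a latin square agreeing with the others on fixed cells, the $f$ fixed occurrences of $x$ lie in $f$ distinct rows and $f$ distinct columns; call the remaining $n-f$ rows and $n-f$ columns $x$-\emph{free}. Fix a single square $L_t$. In a row that already contains a fixed $x$, that occurrence is the unique $x$ of the row, so $x$ cannot appear in a trade cell there; thus every trade occurrence of $x$ lies in an $x$-free row, and the analogous one-per-column argument shows each such occurrence also lies in an $x$-free column (otherwise $L_t$ would carry two copies of $x$ in some column). Counting one $x$ per $x$-free row then shows that, in every square $L_t$, the trade occurrences of $x$ realize a perfect matching between the $n-f$ $x$-free rows and the $n-f$ $x$-free columns.

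The crux is then a pigeonhole step against the all-different property. Pick one $x$-free row $i$. Each of the $\mu$ squares places exactly one $x$ in a trade cell of row $i$, and by the previous step that cell lies in an $x$-free column. If two distinct squares placed their $x$ in the same cell $(i,j)$, the $\mu$ entries in that trade cell would fail to be pairwise distinct, violating the defining property of a $\mu$-way latin trade. Hence these $\mu$ occurrences occupy $\mu$ distinct $x$-free columns, whereas only $n-f \le \mu-1$ such columns exist — a contradiction. Therefore $x$ never appears in the trade part, i.e.\ it appears only in the intersection part. I expect the only delicate point to be the verification that every trade occurrence of $x$ lands in the $x$-free subgrid, in both rows and columns; once that is secured, the collision between the $\mu$ forced columns and the at most $\mu-1$ available columns finishes the argument immediately.
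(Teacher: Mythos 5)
Your proof is correct and rests on the same idea as the paper's: an element occurring in the trade part must, by the all-different condition, occupy at least $\mu$ distinct cells in any row (and column) of the trade where it occurs, which is incompatible with having at most $\mu-1$ rows and columns free of fixed occurrences. The paper simply cites this fact about $\mu$-way latin trades and concludes there are at most $n-\mu$ intersection positions available, whereas you re-derive it from scratch via the pigeonhole in a single $x$-free row; the substance is the same.
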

\begin{proof}{
By Lemma~\ref{lemma2.2.3 Hadi Thesis}, if an element appears in the trade,
then it appears in at least $\mu$ rows (and $\mu$ columns) of each partial latin square in the trade.
Hence, it occurs at most $n-\mu$ times in the intersection part.
}\end{proof}
In \cite{MR1874724} it is shown that 
$I^3[2n]\supseteq I^3[n]+I^3[n]+I^3[n]+I^3[n]$ for $n \geq 1$.
This can be simply generalized to $I^{\mu}[2n]$ for any $\mu \geq 4$. Using this generalization,
together with the fact that any latin square of order $i$ can be
embedded in a latin square of order $2n$ when $i\leq n$, we
obtain the following proposition.
\begin{proposition}{
\label{theorem2.3.1 Hadi Thesis}
For any $n\geq 1$ and $\mu \geq 2$ we have
$$I^{\mu}[2n]\supseteq \Big(I^{\mu}[n]+I^{\mu}[n]+I^{\mu}[n]+I^{\mu}[n]\Big)\cup
\Big(\bigcup_{i=1}^{n}{\big(I^{\mu}[i]+\{(2n)^2-i^2\}\big)}\Big).$$
}\end{proposition}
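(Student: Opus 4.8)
The plan is to establish the two pieces of the union separately, since the right-hand side is itself a union of two sets.

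For the first piece, $I^{\mu}[2n]\supseteq I^{\mu}[n]+I^{\mu}[n]+I^{\mu}[n]+I^{\mu}[n]$, I would use a block construction. Suppose $k_1,k_2,k_3,k_4$ are intersection numbers realized by four $\mu$-way latin squares of order $n$. Split the symbol set $\{1,\ldots,2n\}$ into $S_0=\{1,\ldots,n\}$ and $S_1=\{n+1,\ldots,2n\}$, and arrange four $n\times n$ blocks in a $2\times2$ pattern so that the top-left and bottom-right blocks carry $\mu$-way latin squares on $S_0$ while the top-right and bottom-left blocks carry $\mu$-way latin squares on $S_1$. Each row and each column of the resulting $2n\times2n$ array then receives all of $S_0$ from one block and all of $S_1$ from the disjoint block, so every one of the $\mu$ layers is a latin square of order $2n$. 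Since cells in different blocks never interact, the all-same-or-all-different condition holds blockwise, and the assembled object is a $\mu$-way latin square whose fixed cells are exactly the union of the fixed cells of the four blocks, giving intersection $k_1+k_2+k_3+k_4$.

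For the second piece, $I^{\mu}[2n]\supseteq \bigcup_{i=1}^{n}\big(I^{\mu}[i]+\{(2n)^2-i^2\}\big)$, I would use embedding. Fix $i\le n$ and let $k\in I^{\mu}[i]$ be realized by a $\mu$-way latin square $\mathcal{L}$ of order $i$. Place $\mathcal{L}$ in the top-left $i\times i$ corner of a $2n\times2n$ array and fill the remaining $(2n)^2-i^2$ cells with a single completion, used identically in all $\mu$ layers. The key observation is that each of the first $i$ rows and each of the first $i$ columns contains precisely the symbols $\{1,\ldots,i\}$ in every layer, because each layer of $\mathcal{L}$ is a latin square of order $i$; hence the symbol deficiency that the completion must repair is the same across all layers. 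Invoking the classical embedding theorem---any latin square of order $i$ embeds in a latin square of order $t$ for every $t\ge 2i$, and here $t=2n\ge 2i$ exactly because $i\le n$---I can choose one such completion and reuse it in every layer. Every cell outside the corner is then fixed, while the corner contributes the $k$ fixed cells of $\mathcal{L}$, so the total intersection is $k+(2n)^2-i^2$.

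The main obstacle lies in the second piece: one must justify that a single completion serves all $\mu$ layers simultaneously. This rests entirely on the fact that the filled $i\times i$ corner presents a layer-independent pattern of used and missing symbols in each of its rows and columns, so the extension depends only on this common deficiency and not on the individual layers. Once this is observed, the embedding theorem supplies the required completion, and checking that the filled-in cells are genuinely fixed (identical across layers) and that the full array is a latin square in each layer is routine.
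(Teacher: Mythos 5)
Your proposal is correct and follows essentially the same route the paper takes: the paper obtains the first piece by generalizing the four-block doubling construction for $I^3[2n]\supseteq I^3[n]+I^3[n]+I^3[n]+I^3[n]$ from Adams et al.\ to arbitrary $\mu$, and the second piece from the fact that a latin square of order $i$ embeds in one of order $2n$ when $i\le n$. If anything, you are more careful than the paper in justifying that one completion serves all $\mu$ layers simultaneously (via the layer-independent deficiency pattern of the $i\times i$ corner), a point the paper leaves implicit.
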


\begin{lemma}{
\label{lemma: transversals}
Let ${\cal L}$ be a $\mu$-way latin square~of order $n$ with $k$ fixed cells.
If $p$ is the number of elements of ${\cal L}$ which appear only in the intersection part then
$$ p \geq \left\lceil n - {{n^2-k}\over{\mu}} \right\rceil .$$
}\end{lemma}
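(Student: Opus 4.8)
The plan is to prove this by a single direct counting argument that pits the number of cells available in the trade part against the minimum number of cells each ``trade symbol'' is forced to occupy. The key structural input is already recorded in the proof of Lemma~\ref{lemma:possible-in-seq}: by Lemma~\ref{lemma2.2.3 Hadi Thesis}, whenever an element appears anywhere in the trade part of $\mathcal{L}$, it must appear in at least $\mu$ rows of the trade part of each of the $\mu$ underlying partial latin squares. Hence every element that is \emph{not} confined to the intersection part occupies at least $\mu$ cells of the trade part of any one fixed copy.

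First I would set up the bookkeeping. Let $q$ denote the number of elements of $\mathcal{L}$ that appear in the trade part. Every one of the $n$ symbols appears somewhere in $\mathcal{L}$, and the $p$ symbols of the statement are precisely those appearing only in the intersection part, so the $n$ symbols split into these two classes and $p + q = n$, i.e.\ $q = n - p$. Now fix one of the $\mu$ partial latin squares making up $\mathcal{L}$ and restrict attention to its trade part, which has exactly $n^2 - k$ filled cells.

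Next I would count these $n^2 - k$ cells according to the symbol they contain. Each such cell carries exactly one of the $q$ trade symbols, and by the structural fact above each trade symbol occupies at least $\mu$ of them; summing the number of occurrences over all $q$ trade symbols recovers the total $n^2 - k$ and gives the inequality
\[
\mu\, q \;\leq\; n^2 - k ,
\]
so that $q \leq (n^2-k)/\mu$. Since $q$ is an integer we in fact have $q \leq \left\lfloor (n^2-k)/\mu \right\rfloor$, and substituting $q = n - p$ yields
\[
p \;\geq\; n - \left\lfloor \frac{n^2-k}{\mu} \right\rfloor \;=\; \left\lceil n - \frac{n^2-k}{\mu} \right\rceil ,
\]
as required.

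I expect no serious obstacle here; the argument is a one-shot averaging estimate. The only point demanding care is that the count must be performed inside a \emph{single} partial latin square rather than across all $\mu$ copies at once: both the ``at least $\mu$ occurrences'' bound and the ``$n^2 - k$ cells'' total must refer to the same copy, otherwise the factor of $\mu$ would be mismatched and the estimate would collapse. I would also verify the elementary identity $n - \lfloor y \rfloor = \lceil n - y \rceil$ for integer $n$, to be sure the floor obtained for $q$ converts cleanly into the stated ceiling for $p$.
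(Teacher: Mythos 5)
Your argument is correct and is essentially the paper's proof: both rest on the fact that each of the $n-p$ symbols occurring in the trade part must occupy at least $\mu$ cells there, and your count $\mu(n-p)\leq n^2-k$ is algebraically identical to the paper's complementary count $k\leq pn+(n-p)(n-\mu)$ of the intersection part. The final conversion $p\geq n-\lfloor (n^2-k)/\mu\rfloor=\lceil n-(n^2-k)/\mu\rceil$ is also fine.
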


\begin{proof}{
From the definition of a $\mu$-way latin square, each of the other $n-p$ elements
appear at least $\mu$ times in the cells of the trade part of ${\cal L}$.
So, each of these $n-p$ elements can appear at most $n-\mu$ times in the
cells of the intersection part.
Hence, $k\leq pn + (n-p)(n-\mu)$, or equivalently, ${\mu}p \geq {\mu}n -n^2 + k$.  
}\end{proof}


\begin{lemma}{
\label{lemma: sequence 4-7}
Let ${\cal L}$ be a $4$-way latin square of order $7$ and $a=(a_1,a_2,\cdots,a_7)$ be
its row (or column) sequence. None of the sequences $\{7,3\}$, $\{7,7,2\}$ and $\{7,7,7,1\}$
can be a subsequence of $a$.
}\end{lemma}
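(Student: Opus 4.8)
The plan is to pass to the trade and run a single counting argument that covers all three sequences at once. First I would note that permuting the rows of all four squares simultaneously again yields a $4$-way latin square and merely permutes the row sequence, so I may assume the rows realizing the subsequence come first. Each of the three sequences consists of $t$ entries equal to $7$ together with one further entry equal to $4-t$, where $t\in\{1,2,3\}$. Reading these off the trade ${\cal T}$ of ${\cal L}$: the $t$ rows with value $7$ are fully fixed and hence contain no trade cell, while the distinguished row $b$ (value $r_b=4-t$) has $s_b=7-r_b=3+t$ trade cells, so $|R_b|=3+t$, and the set $F$ of symbols that are fixed in row $b$ is exactly $\{1,\dots,7\}\setminus R_b$, of size $|F|=4-t$.

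Next I would analyse the columns in which row $b$ has a trade cell; call this set $J$, with $|J|=3+t$. For $j\in J$, Lemma~\ref{lemma2.2.3 Hadi Thesis} applied to the nonempty cell $(b,j)$ gives $|R_b\cap C_j|\ge 4$, whence $|R_b\setminus C_j|\le (3+t)-4=t-1$. The $t$ fully fixed rows each meet column $j$ in a fixed cell, and these $t$ entries are distinct (a latin column) and none of them can belong to $C_j$, because in any single one of the four squares such a symbol already occupies its unique slot of column $j$ at that fixed cell. Since at most $t-1$ of these $t$ distinct symbols can lie in $R_b$, at least one of them must lie in $F$.

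The contradiction then comes from counting the $F$-symbols inside the $t\times(3+t)$ block formed by the $t$ fully fixed rows and the columns $J$. By the previous paragraph every one of its $3+t$ columns contains an entry from $F$, so the block holds at least $3+t$ cells whose symbol lies in $F$; on the other hand each symbol of $F$ appears at most once per row, so the block holds at most $t|F|=t(4-t)$ such cells. This forces $3+t\le t(4-t)$, i.e. $t^2-3t+3\le 0$, which is impossible since $t^2-3t+3$ has negative discriminant and is strictly positive for every real $t$. Applying the same argument to the transpose of ${\cal L}$ disposes of the column sequence.

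I expect the genuine obstacle to be the cases $t=2$ and $t=3$. For $t=1$ the inequality $|R_b\cap C_j|\ge4=|R_b|$ already forces $R_b\subseteq C_j$, so each of the four fixed entries is literally pushed into the three-element set $F$ and one is done immediately. For $t\ge2$ one only has $|R_b\cap C_j|<|R_b|$, so no individual fixed entry can be pinned outside $R_b$; the argument must instead be averaged to the weaker ``at least one $F$-symbol per column'' statement and closed by the per-row bound on occurrences of $F$-symbols. The point that makes everything work uniformly is that the single inequality $3+t\le t(4-t)$ fails for all admissible $t$.
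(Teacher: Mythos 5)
Your argument is correct and rests on the same mechanism as the paper's: at a trade cell $(b,j)$ the four entries must lie in $R_b\cap C_j$ (Lemma~\ref{lemma2.2.3 Hadi Thesis}), which forces at least one fixed symbol of the fully fixed rows in column $j$ into the small set $F$ of symbols fixed in row $b$, and a row-versus-column count then gives the contradiction. The difference is only presentational: the paper works out the $\{7,3\}$ case concretely and declares the other two ``similar,'' whereas your single inequality $3+t\le t(4-t)$ treats all three subsequences uniformly and supplies the detail the paper omits for $t=2,3$.
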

\begin{proof}{
We show that $\{7,3\}$ can not be a subsequence of the row sequence of any $4$-way latin square of order $7$.
Other statements are similar.
Suppose, on the contrary, that there is such an ${\cal L}$.
By a permutation on row and columns of ${\cal L}$, we may assume that the first
and second rows have $7$ and $3$ fixed cells, respectively, and the fixed cells of
the second row are located at the first $3$ columns. By a permutation on the elements,
we may assume that the first row is filled as $1,2,\ldots,7$ in order,
and the $3$ fixed elements of the second row are $x$, $y$, and $z$ (see Figure \ref{fig: 7-3 fix}).\\
\begin{figure}[ht]
\vskip 0.5cm
\begin{center}
\begin{tabular}{|c|c|c|c|c|c|c|} \hline
1 & 2 & 3 & 4 & 5 & 6 & 7 \\
\hline
$x$ & $y$ & $z$ & & & & \\
\hline
\end{tabular}
\end{center}
\caption{A possible $\{7,3\}$ sequence.}
\label{fig: 7-3 fix}
\end{figure}
\\
Consider the cell $(2,4)$ in the trade part. Since there should appear
four different elements distinct from $x,y,z$ and $4$, we should have
$4\in \{x,y,z\}$. By a similar argument for the rest of cells in
the second row, we have $5,6,7\in \{x,y,z\}$ which is impossible.}\end{proof}
With the same approach as in the above, we can show the following two
lemmata as well.
\begin{lemma}{
\label{lemma: sequence 4-6}
Let ${\cal L}$ be a $4$-way latin square of order $6$ and $a=(a_1,a_2,\cdots,a_6)$ be
its row (or column) sequence. None of the sequences $\{6,2\}$ and $\{6,6,1\}$
can be a subsequence of $a$.
}\end{lemma}
\begin{lemma}{
\label{lemma: sequence 4-5}
Let ${\cal L}$ be a $4$-way latin square of order $5$ and $a=(a_1,a_2,\cdots,a_5)$ be
its row (or column) sequence. The sequence $\{5,1\}$ cannot be a subsequence of $a$.
}\end{lemma}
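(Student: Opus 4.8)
The plan is to mimic the proof of Lemma~\ref{lemma: sequence 4-7} exactly, exploiting that for $\mu=4$ and $n=5$ the relevant counting is razor-thin. Transposing ${\cal L}$ interchanges the row and column sequences, so it suffices to rule out $\{5,1\}$ as a subsequence of the row sequence; I therefore assume for contradiction that one row is entirely fixed (all $5$ cells) while another row has exactly one fixed cell.

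First I would normalize the configuration by the same permutations used before. After permuting rows I may take the fully-fixed row to be row $1$ and the singly-fixed row to be row $2$; permuting columns I may assume the unique fixed cell of row $2$ lies in column $1$; and permuting the symbols I may assume row $1$ reads $1,2,3,4,5$ in order. Write $x$ for the fixed entry in cell $(2,1)$. Since cell $(1,1)=1$ is fixed, hence identical in all four squares, the symbol $1$ already occupies column $1$ in every square, forcing $x\neq 1$, so $x\in\{2,3,4,5\}$.

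Next I would examine the trade cells $(2,j)$ for $j\in\{2,3,4,5\}$. In each of the four latin squares the entry of $(2,j)$ must differ from the fixed symbol $x$ in its own row (cell $(2,1)$) and from the fixed symbol $j$ in its own column (cell $(1,j)$, identical across all four squares). Hence the four entries demanded in cell $(2,j)$, which by the definition of a $4$-way latin square are pairwise distinct, all lie in $\{1,2,3,4,5\}\setminus\{x,j\}$. Whenever $j\neq x$ this set has only three elements, so four distinct entries cannot be placed, a contradiction.

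The counting is already decisive: among $j\in\{2,3,4,5\}$ at most one index can coincide with $x$, so at least three of these cells yield the contradiction, and a single one already suffices. I do not expect any real obstacle beyond setting up the normalization; the only point needing care is the observation that the excluded column symbol $j$ originates from a fixed cell and is therefore present in column $j$ of all four squares simultaneously, which is precisely what makes the three-element bound apply uniformly. This is the $n=5$ analogue of the step in Lemma~\ref{lemma: sequence 4-7} that forced $4,5,6,7$ into the small fixed set.
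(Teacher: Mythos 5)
Your proof is correct and is essentially the paper's own argument: the paper proves Lemma~\ref{lemma: sequence 4-5} by the same normalization and counting used for the $\{7,3\}$ case of Lemma~\ref{lemma: sequence 4-7}, namely that each trade cell $(2,j)$ with $j\neq x$ would need four distinct entries drawn from the three-element set $\{1,\ldots,5\}\setminus\{x,j\}$. Nothing further is needed.
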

%

\section{Constructions \label{construction}}

%
In this section, we introduce four techniques which contribute to the generation of the
majority of $4$-way intersections. The first technique is inspired by \cite{MR1874724} and the rest are new.
We start with illustrating the first technique by an example, then elaborating the technique in the sequel.
\begin{example}{
Consider the following partial $4$-way latin squares ${\cal A}$,
${\cal B}$, and~${\cal C}$. 

\label{example2.3.2 Hadi Thesis}
%

\begin{footnotesize}
\vspace{.5cm}
\def\arraystretch{1.7}
$$
{\cal A}=
\begin{tabular}
{|@{\hspace{.1pt}}c@{\hspace{.1pt}}
|@{\hspace{.1pt}}c@{\hspace{.1pt}}
|@{\hspace{.1pt}}c@{\hspace{.1pt}}
|@{\hspace{.1pt}}c@{\hspace{.1pt}}
||@{\hspace{.1pt}}c@{\hspace{.1pt}}
|@{\hspace{.1pt}}c@{\hspace{.1pt}}
|@{\hspace{.1pt}}c@{\hspace{.1pt}}
|@{\hspace{.1pt}}c@{\hspace{.1pt}}
|@{\hspace{.1pt}}c@{\hspace{.1pt}}|} \hline
\x. &\x. &\x. &\x. & \mfour2345 & \mfour1453 & \mfour4132 & \mfour5214 & \mfour3521 \\ \hline
\x. &\x. &\x. &\x. & \mfour3254 & \mfour4512 & \mfour5321 & \mfour1435 & \mfour2143 \\ \hline
\x. &\x. &\x. &\x. & \mfour4523 & \mfour5231 & \mfour2415 & \mfour3142 & \mfour1354 \\ \hline
\x. &\x. &\x. &\x. & \mfour5432 & \mfour3124 & \mfour1543 & \mfour2351 & \mfour4215 \\ \hline \hline
\mfour2345 & \mfour3254 & \mfour4523 & \mfour5432 &$1$ &\x. &\x. &\x. &\x.\\ \hline
\mfour1453 & \mfour4512 & \mfour5231 & \mfour3124 &\x. &\mfour2345 &\x. &\x. &\x.\\ \hline
\mfour4132 & \mfour5321 & \mfour2415 & \mfour1543 &\x. &\x. &\mfour3254 &\x. &\x.\\ \hline
\mfour5214 & \mfour1435 & \mfour3142 & \mfour2351 &\x. &\x. &\x. &\mfour4523 &\x.\\ \hline
\mfour3521 & \mfour2143 & \mfour1354 & \mfour4215 &\x. &\x. &\x. &\x. &\mfour5432 \\ \hline
\end{tabular}
\ \
\ \ {\cal B}=
\def\arraystretch{1.46}
\begin{tabular}
{|@{\hspace{6.4pt}}c@{\hspace{6.4pt}}
|@{\hspace{6.4pt}}c@{\hspace{6.4pt}}
|@{\hspace{6.4pt}}c@{\hspace{6.4pt}}
|@{\hspace{6.4pt}}c@{\hspace{6.4pt}}
||@{\hspace{7.4pt}}c@{\hspace{7.4pt}}
|@{\hspace{7.4pt}}c@{\hspace{7.4pt}}
|@{\hspace{7.4pt}}c@{\hspace{7.4pt}}
|@{\hspace{7.4pt}}c@{\hspace{7.4pt}}
|@{\hspace{7.4pt}}c@{\hspace{7.4pt}}|} \hline
$9$ &$6$ &$7$ &$8$ &\x. &\x. &\x. &\x. &\x.\\ \hline
$8$ &$9$ &$6$ &$7$ &\x. &\x. &\x. &\x. &\x.\\ \hline
$7$ &$8$ &$9$ &$6$ &\x. &\x. &\x. &\x. &\x.\\ \hline
$6$ &$7$ &$8$ &$9$ &\x. &\x. &\x. &\x. &\x.\\ \hline \hline
\x. &\x. &\x. &\x. &\x. &\x. &\x. &\x. &\x.\\ \hline
\x. &\x. &\x. &\x. &\x. &\x. &\x. &\x. &\x.\\ \hline
\x. &\x. &\x. &\x. &\x. &\x. &\x. &\x. &\x.\\ \hline
\x. &\x. &\x. &\x. &\x. &\x. &\x. &\x. &\x.\\ \hline
\x. &\x. &\x. &\x. &\x. &\x. &\x. &\x. &\x.\\ \hline
\end{tabular}
$$
\end{footnotesize}

and

$$
{\cal C}=
\def\arraystretch{1.55}
\begin{tabular}
{|@{\hspace{9.5pt}}c@{\hspace{9.5pt}}
|@{\hspace{9.5pt}}c@{\hspace{9.5pt}}
|@{\hspace{9.5pt}}c@{\hspace{9.5pt}}
|@{\hspace{9.5pt}}c@{\hspace{9.5pt}}
||@{\hspace{.1pt}}c@{\hspace{.1pt}}
|@{\hspace{.1pt}}c@{\hspace{.1pt}}
|@{\hspace{.1pt}}c@{\hspace{.1pt}}
|@{\hspace{.1pt}}c@{\hspace{.1pt}}
|@{\hspace{.1pt}}c@{\hspace{.1pt}}|} \hline
\x. &\x. &\x. &\x. &\x. &\x. &\x. &\x. &\x. \\ \hline
\x. &\x. &\x. &\x. &\x. &\x. &\x. &\x. &\x. \\ \hline
\x. &\x. &\x. &\x. &\x. &\x. &\x. &\x. &\x. \\ \hline
\x. &\x. &\x. &\x. &\x. &\x. &\x. &\x. &\x. \\ \hline \hline
\x. &\x. &\x. &\x. &\x. &\mfour6789 &\mfour7896 &\mfour8967 &\mfour9678 \\ \hline
\x. &\x. &\x. &\x. &\mfour9678 &\x. &\mfour6789 &\mfour7896 &\mfour8967 \\ \hline
\x. &\x. &\x. &\x. &\mfour8967 &\mfour9678 &\x. &\mfour6789 &\mfour7896 \\ \hline
\x. &\x. &\x. &\x. &\mfour7896 &\mfour8967 &\mfour9678 &\x. &\mfour6789 \\ \hline
\x. &\x. &\x. &\x. &\mfour6789 &\mfour7896 &\mfour8967 &\mfour9678 &\x. \\ \hline
\end{tabular}
$$
}\end{example}

By combining these partial $4$-way latin squares,
we obtain a $4$-way latin square of order $9$ with $17$ fixed cells.

\begin{technique}{{\bf [$n \rightarrow 2n+1$ technique]}
\label{tech1}
This technique constructs a $\mu$-way latin square~of order $2n+1$ by 
generating and combining three partial $\mu$-way latin squares ${\cal A}$, ${\cal B}$, and ${\cal C}$.
Partial latin squares ${\cal A}$, ${\cal B}$, and ${\cal C}$ are generated as follows.
Let ${\cal A'}$ be a $\mu$-way latin square~of order $n+1$ with elements from $\{1,\ldots,n+1\}$.
We construct ${\cal A}$ by embedding,
symmetrically, the first $n$ rows of ${\cal A'}$, at the top-right and bottom-left
corners of a square of order $2n+1$ and laying the $(n+1)^{th}$ row of ${\cal A'}$ at
the down-right corner, diagonally.\\
${\cal B}$ is constructed by embedding a $\mu$-way latin square~${\cal B'}$ of order $n$
with elements from $\{n+2,\ldots,2n+1\}$ at the top-left corner of a square of order $2n+1$.\\
${\cal C}$ is made by embedding a partial $\mu$-way latin square~of order $n+1$, say ${\cal C'}$,
at the down-right corner of a square of order $2n+1$.
Note that the elements of ${\cal C'}$ are from $\{n+2,\ldots,2n+1\}$
and diagonal cells of ${\cal C'}$ are empty.
}\end{technique}
The following lemma is a generalization of Lemma 2.3 in \cite{MR1874724}. As in
Proposition \ref{theorem2.3.1 Hadi Thesis}, by the fact that any latin square of order $i$ can be
embedded in a latin square of order $2n+1$ when $i\leq n$ we have
$\bigcup_{i=1}^{n}{(I^4[i]+\{(2n+1)^2-i^2\})} \subseteq I^4[2n+1]$.
\begin{lemma}
\label{theorem2.3.2 Hadi Thesis}
If $n\geq 4$ then\\
$$I^4[2n+1] \supseteq \{I^4[n]+(n+1)\{[0,n-4]\cup \{n\}\}+C\} \cup X.$$
where
$C=\bigcup_{t=1}^{n-3}\{2tn,2tn-t,2tn-n\} \cup \{0,1,2\} \cup  (2n+1)\{[0,n-3]\cup \{n+1\}\} \cup (n+1)\{[1,2n-7]\cup [n+1,2n-3]\}$ and
$X = \bigcup_{i=1}^{n}{(I^4[i]+\{(2n+1)^2-i^2\})}$.
\end{lemma}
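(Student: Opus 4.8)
The plan is to produce every element of the claimed set as the number of identical cells of a $4$-way latin square of order $2n+1$ built by Technique~\ref{tech1} with $\mu=4$. The union $X$ needs no new argument: it is exactly the family described in the remark preceding the statement, since any latin square of order $i\le n$ embeds in the lower-right $i\times i$ corner and contributes $(2n+1)^2-i^2$ agreeing cells on top of an element of $I^4[i]$. So I would concentrate on the first summand $I^4[n]+(n+1)\{[0,n-4]\cup\{n\}\}+C$, and my first step is to record how the identical cells of the combined square split. In the block layout of Technique~\ref{tech1} they fall into four disjoint groups: the intersection $b$ of ${\cal B}'$ in the top-left $n\times n$ block; the identical cells $a_1$ of ${\cal A}'$ in its first $n$ rows, which occur \emph{twice} because those rows are placed symmetrically in the top-right and bottom-left blocks; the identical cells $a_2$ of the last row of ${\cal A}'$, which occur once on the diagonal of the bottom-right block; and the intersection $c$ of ${\cal C}'$ among the off-diagonal cells of that block. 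The total is therefore $b+2a_1+a_2+c$. A key structural point, which I would establish first, is that ${\cal A}'$ contributes only symbols $\{1,\dots,n+1\}$ while ${\cal B}'$ and ${\cal C}'$ contribute only symbols $\{n+2,\dots,2n+1\}$, so the three blocks interact solely through the requirement that each line be a permutation, and the three groups of fixed cells can be chosen independently. This is what turns the count into a genuine Minkowski sum.

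Next I would match the summands to the components. The term $b\in I^4[n]$ is realized by letting ${\cal B}'$ be an arbitrary $4$-way latin square of order $n$; it is free and independent of the rest. For the factor $(n+1)\{[0,n-4]\cup\{n\}\}$ I would use ${\cal C}'$, which is a partial square of order $n+1$ on $n$ symbols with empty diagonal in which every line is complete, so it has $n(n+1)$ cells and, when balanced, each symbol occupies exactly $n+1$ of them. Fixing the cell-classes of $j$ chosen symbols and letting the remaining $n-j$ symbols carry a $4$-way latin trade yields $c=j(n+1)$; the residual trade runs on $n-j$ symbols, so by Lemma~\ref{lemma2.2.2 Hadi Thesis} it exists exactly when $n-j\ge 4$, which forces $j\in[0,n-4]$, while $j=n$ is the all-identical case that needs no trade. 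This reproduces $(n+1)\{[0,n-4]\cup\{n\}\}$.

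The remaining set $C$ I would obtain entirely from $2a_1+a_2$, by varying which lines of ${\cal A}'$ are fixed and how many identical cells sit on the diagonal. Fixing $s$ complete columns of ${\cal A}'$ contributes $2sn$ from the doubled first-$n$-row parts together with $s$ diagonal cells, i.e.\ $s(2n+1)$, which I expect to give $C_3=(2n+1)\{[0,n-3]\cup\{n+1\}\}$, the range again being pinned by the need for a $4$-way trade on the $n+1-s$ unfixed columns or by the all-fixed case. Fixing complete rows of ${\cal A}'$ contributes multiples of $n+1$ (doubled for the first $n$ rows, single for the last), producing the multiples in $C_4$; fixing $t$ columns only within the first $n$ rows gives the doubled value $2tn$, and small modifications that delete $t$ or $n$ of those cells while repairing validity on the diagonal should give $2tn-t$ and $2tn-n$, producing $C_1$; and taking all first $n$ rows to be trade with $0$, $1$, or $2$ identical cells on the last row gives $C_2=\{0,1,2\}$. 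Because the choices of ${\cal A}'$, ${\cal B}'$ and ${\cal C}'$ are independent, the intersection number of the combined square then ranges over the full Minkowski sum.

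The main obstacle will be compatibility rather than arithmetic. Each modification above must leave the overlay a genuine $4$-way latin square: every trade cell must still receive four pairwise distinct symbols, and every line must remain a permutation of $\{1,\dots,2n+1\}$. This is delicate precisely for the ``exotic'' offsets $2tn-t$ and $2tn-n$ and for the interval endpoints of $[1,2n-7]$ and $[n+1,2n-3]$ in $C_4$, where one cannot simply fix a union of whole lines but must excise a few cells and patch the result with a small $4$-way latin trade of a prescribed volume. The existence of these patches, of the balanced ${\cal C}'$, and of the residual trades in ${\cal C}'$ and in the unfixed rows and columns of ${\cal A}'$ is exactly what requires $n\ge 4$, so that Lemma~\ref{lemma2.2.2 Hadi Thesis} applies with $\mu=4$; the small-$t$ and interval-endpoint cases will need separate hand verification. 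Once every summand is shown to be simultaneously realizable, the intersection numbers of the combined squares fill out $I^4[n]+(n+1)\{[0,n-4]\cup\{n\}\}+C$, which together with $X$ establishes the claimed inclusion.
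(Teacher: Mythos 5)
Your proposal follows exactly the route the paper intends: the lemma is presented as a consequence of Technique \ref{tech1} with no further proof supplied, and your decomposition of the fixed-cell count into $b+2a_1+a_2+c$ --- with $b\in I^4[n]$ from ${\cal B}'$, the term $j(n+1)$ from fixing $j$ symbol classes of ${\cal C}'$, the set $C$ from the two placements of the first $n$ rows of ${\cal A}'$ plus its diagonal row, and $X$ from the embedding remark --- is precisely the intended argument. The residual verifications you flag (notably the offsets $2tn-t$ and $2tn-n$, which indeed cannot come from the doubled first-$n$-rows part alone since every fixed cell there contributes $2$, and the interval endpoints in $C$) are likewise left implicit in the paper, so your reconstruction is as complete as the source.
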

%
\begin{technique}{{\bf [Trade-into-Trade technique]}
\label{tech2}
\noindent In this technique we consider a $\mu'$-way latin square of order $n$.
Then for each $i$,
$1\leq i\leq \mu'$, we substitute each entry of unfixed cells in the $i^{th}$ latin square
with a proper $\mu_i$-way latin trade of order $m$. In this way we obtain a
$(\sum_{i=1}^{\mu'}\mu_i)$-way latin square of order $mn$.

\noindent Let's illustrate a simple case of this method with the following example.
\begin{example}{
\label{example2.3.4 Hadi Thesis}
Consider the following $2$-way latin square of order $4$ with $9$ fixed cells.

\def\arraystretch{1.8}
\begin{center}
\begin{tabular}
{|@{\hspace{7pt}}c@{\hspace{7pt}}
|@{\hspace{1pt}}c@{\hspace{1pt}}
|@{\hspace{1pt}}c@{\hspace{1pt}}
|@{\hspace{1pt}}c@{\hspace{1pt}}|} \hline
${A_1}$ &${A_2}$ &${A_3}$ &${A_4}$ \\ \hline
${A_3}$ &${A_4}$ &${A_1}_{A_2}$ &${A_2}_{A_1}$ \\ \hline
${A_2}$ &${A_3}_{A_1}$ &${A_4}$ &${A_1}_{A_3}$ \\ \hline
${A_4}$ &${A_1}_{A_3}$ &${A_2}_{A_1}$ &${A_3}_{A_2}$ \\ \hline
\end{tabular}
\end{center}
}\end{example}
We replace each $A_i$, $1\leq i\leq 4$, with a $2$-way latin trade. Note that
elements of any two trades corresponding to two different $A_i$ are disjoint.
This way, we obtain the following $4$-way latin square of order $8$ with $36$ fixed cells.\\

\def\arraystretch{1.3}
\begin{center}
\begin{tabular}
{|@{\hspace{.17pt}}c@{\hspace{.17pt}} |@{\hspace{.17pt}}c@{\hspace{.17pt}}
|@{\hspace{.17pt}}c@{\hspace{.17pt}} |@{\hspace{.17pt}}c@{\hspace{.17pt}}
|@{\hspace{.17pt}}c@{\hspace{.17pt}} |@{\hspace{.17pt}}c@{\hspace{.17pt}}
|@{\hspace{.17pt}}c@{\hspace{.17pt}} |@{\hspace{.17pt}}c@{\hspace{.17pt}}
|}
\hline
\n1  & \n2  & \n3  & \n4  & \n5  & \n6  &
\n7  & \n8  \\
\hline
\n2  & \n1  & \n4  & \n3  & \n6  & \n5  &
\n8  & \n7  \\
\hline \hline
\n5  & \n6  & \n7  & \n8  & \r12  & \r21  &
\r34  & \r43  \\
\hline
\n6  & \n5  & \n8  & \n7  & \r21  & \r12  &
\r43  & \r34  \\
\hline \hline
\n3  & \n4  & \r56  & \r65  & \n7  & \n8  &
\r12  & \r21  \\
\hline
\n4  & \n3  & \r65  & \r56  & \n8  & \n7  &
\r21  & \r12  \\
\hline \hline
\n7  & \n8  & \r12  & \r21  & \r34  & \r43  &
\r56  & \r65  \\
\hline
\n8  & \n7  & \r21  & \r12  & \r43  & \r34  &
\r65  & \r56  \\
\hline 
\end{tabular}
\hspace{.5cm}
\begin{tabular}
{|@{\hspace{.17pt}}c@{\hspace{.17pt}} |@{\hspace{.17pt}}c@{\hspace{.17pt}}
|@{\hspace{.17pt}}c@{\hspace{.17pt}} |@{\hspace{.17pt}}c@{\hspace{.17pt}}
|@{\hspace{.17pt}}c@{\hspace{.17pt}} |@{\hspace{.17pt}}c@{\hspace{.17pt}}
|@{\hspace{.17pt}}c@{\hspace{.17pt}} |@{\hspace{.17pt}}c@{\hspace{.17pt}}
|}
\hline
\n1  & \n2  & \n3  & \n4  & \n5  & \n6  &
\n7  & \n8  \\
\hline
\n2  & \n1  & \n4  & \n3  & \n6  & \n5  &
\n8  & \n7  \\
\hline \hline
\n5  & \n6  & \n7  & \n8  & \r34  & \r43 &
\r12  & \r21  \\
\hline
\n6  & \n5  & \n8  & \n7  & \r43  & \r34 &
\r21  & \r12  \\
\hline \hline
\n3  & \n4  & \r12  & \r21 & \n7  & \n8  &
\r56  & \r65  \\
\hline
\n4  & \n3  & \r21  & \r12  & \n8  & \n7 &
\r65  & \r56  \\
\hline \hline
\n7  & \n8  & \r56  & \r65  & \r12  & \r21  &
\r34  & \r43  \\
\hline
\n8  & \n7  & \r65  & \r56  & \r21  & \r12  &
\r43  & \r34  \\
\hline 
\end{tabular}
\end{center}

More generally, this technique can be used to construct {\sf fine
structures} which are defined in \cite{MR2202131}.
}\end{technique}

\noindent Let's consider an example before explaining next technique.
\begin{example}{
\label{example2.3.5 Hadi Thesis}
Since the following three partial latin squares are completable to
a latin square of order $9$ we have $46\in I^3[9]$.
\def\arraystretch{.9}
$$
{\cal A}_1=
\begin{tabular}
{|@{\hspace{4pt}}c@{\hspace{4pt}}
|@{\hspace{4pt}}c@{\hspace{4pt}}
|@{\hspace{4pt}}c@{\hspace{4pt}}
|@{\hspace{4pt}}c@{\hspace{4pt}}
|@{\hspace{4pt}}c@{\hspace{4pt}}
||@{\hspace{4pt}}c@{\hspace{4pt}}
|@{\hspace{4pt}}c@{\hspace{4pt}}
|@{\hspace{4pt}}c@{\hspace{4pt}}
|@{\hspace{4pt}}c@{\hspace{4pt}}|}
\ &\ &\ &\ &\ &\ &\ &\ &\ \\[-1.5 mm] \hline 
\ &\ &\ &\ &\ &\ &\ &\ &\ \\ \hline
\ &\ &\ &\ &\ &$\bf 1$ &$\bf 2$ &$\bf 3$ &$\bf 4$ \\ \hline
\ &\ &\ &\ &\ &$\bf 4$ &$\bf 1$ &$\bf 2$ &$\bf 3$\\ \hline
$5$ &$6$ &$7$ &$8$ &$9$ &$\bf 3$ &$\bf 4$ &$\bf 1$ &$\bf 2$ \\ \hline
$9$ &$5$ &$6$ &$7$ &$8$ &$\bf 2$ &$\bf 3$ &$\bf 4$ &$\bf 1$ \\ \hline
$1$ &$2$ &$3$ &$4$ &$5$ &$\bf 6$ &$\bf 7$ &$\bf 8$ &$\bf 9$ \\ \hline
\end{tabular}
\hspace{.5cm}
\ {\cal A}_2=
\begin{tabular}
{|@{\hspace{4pt}}c@{\hspace{4pt}}
|@{\hspace{4pt}}c@{\hspace{4pt}}
|@{\hspace{4pt}}c@{\hspace{4pt}}
|@{\hspace{4pt}}c@{\hspace{4pt}}
|@{\hspace{4pt}}c@{\hspace{4pt}}
||@{\hspace{4pt}}c@{\hspace{4pt}}
|@{\hspace{4pt}}c@{\hspace{4pt}}
|@{\hspace{4pt}}c@{\hspace{4pt}}
|@{\hspace{4pt}}c@{\hspace{4pt}}|}
\ &\ &\ &\ &\ &\ &\ &\ &\ \\[-1.5 mm] \hline
\ &\ &\ &\ &\ &\ &\ &\ &\ \\ \hline
\ &\ &\ &\ &\ &$\bf 4$ &$\bf 1$ &$\bf 2$ &$\bf 3$ \\ \hline
\ &\ &\ &\ &\ &$\bf 3$ &$\bf 4$ &$\bf 1$ &$\bf 2$ \\ \hline
$9$ &$5$ &$6$ &$7$ &$8$  &$\bf 2$ &$\bf 3$ &$\bf 4$ &$\bf 1$ \\ \hline
$1$ &$2$ &$3$ &$4$ &$5$  &$\bf 6$ &$\bf 7$ &$\bf 8$ &$\bf 9$ \\ \hline
$5$ &$6$ &$7$ &$8$ &$9$  &$\bf 1$ &$\bf 2$ &$\bf 3$ &$\bf 4$ \\ \hline
\end{tabular}
\vspace{.5cm}
$$
$$
{\cal A}_3=
\begin{tabular}
{|@{\hspace{4pt}}c@{\hspace{4pt}}
|@{\hspace{4pt}}c@{\hspace{4pt}}
|@{\hspace{4pt}}c@{\hspace{4pt}}
|@{\hspace{4pt}}c@{\hspace{4pt}}
|@{\hspace{4pt}}c@{\hspace{4pt}}
||@{\hspace{4pt}}c@{\hspace{4pt}}
|@{\hspace{4pt}}c@{\hspace{4pt}}
|@{\hspace{4pt}}c@{\hspace{4pt}}
|@{\hspace{4pt}}c@{\hspace{4pt}}|}
\ &\ &\ &\ &\ &\ &\ &\ &\ \\[-1.5mm] \hline
\ &\ &\ &\ &\ &\ &\ &\ &\ \\ \hline
\ &\ &\ &\ &\ &$\bf 3$ &$\bf 4$ &$\bf 1$ &$\bf 2$ \\ \hline
\ &\ &\ &\ &\ &$\bf 2$ &$\bf 3$ &$\bf 4$ &$\bf 1$ \\ \hline
$1$ &$2$ &$3$ &$4$ &$5$ &$\bf 6$ &$\bf 7$ &$\bf 8$ &$\bf 9$ \\ \hline
$5$ &$6$ &$7$ &$8$ &$9$ &$\bf 1$ &$\bf 2$ &$\bf 3$ &$\bf 4$ \\ \hline
$9$ &$5$ &$6$ &$7$ &$8$ &$\bf 4$ &$\bf 1$ &$\bf 2$ &$\bf 3$ \\ \hline
\end{tabular}
$$
}\end{example}
\begin{technique}{{\bf [Gear technique 1]}
\label{tech3}
Consider a completable partial latin square of order $n$, as in the following figure,
where $A=\{1,\ldots,a\}$, $B=\{a+1,\ldots,n\}$ and $A\cup B$ are the
sets of entries at the
specified portion. Then by cyclically permuting the rows of the two
subrectangles of order $y\times b$ and $(y+x)\times a$, we obtain
a $\mu$-way latin square of order $n$ (after filling the empty cells identically, in all latin
squares).\\
\def\sp{\hspace{1.2cm}}
\def\spp{\hspace{2.4cm}}
\def\d{\displaystyle}

\def\arraystretch{1}
$$
\begin{array}{|c|c|cc}
\cline{1-2}
\multicolumn{2}{|c|}{}\\[3mm]
\multicolumn{1}{|c}{}&\multicolumn{1}{r|}{\hspace{-1.5mm}\overbrace{\spp}^{{\d a}}\hspace{-1.5mm}}\\
\cline{2-2}
\multicolumn{1}{|c|}{}&\multicolumn{1}{|c|}{}&
\hspace{-4mm}\left.\begin{array}{c}
\\[4mm]
\end{array}\right\}x\\[-7.8mm]
\multicolumn{1}{|c}{\hspace{-1.5mm}\overbrace{\spp}^{{\d b}}\hspace{-1.5mm}}&\multicolumn{1}{|c|}{}\\
\cline{1-1}
&&&
\hspace{-23.8mm}\left.\begin{array}{c}
\\[5mm]
\end{array}\right\}\mu-1\\
\cline{1-2}
\multicolumn{2}{|c|}{}\\[8mm]
\cline{1-2}
\multicolumn{3}{c}{}\\[-6.5mm]  
\multicolumn{1}{c}{}\\[-23mm]
\multicolumn{3}{c}{}&
\hspace{4mm}\left.\begin{array}{c}
\\[1.9cm]
\end{array}\right\}y\\[-2.0cm]
\multicolumn{1}{c}{B}\\[-0.8cm] 
\multicolumn{1}{r}{}&\multicolumn{1}{c}{A}\\[1.cm]
\multicolumn{2}{c}{A\cup B}
\end{array}
$$\\

Sufficient conditions to have such a completable partial latin square are

\begin{enumerate}{
 \item[(1)] $a\geq x+\mu-1$,
 \item[(2)] $b\geq x+\mu-1$,
 \item[(3)] $a+b=n$,
 \item[(4)] $x\geq 1$ and
 \item[(5)] $y\geq \mu$.
}\end{enumerate}
Briefly, conditions (1), (2), and (3) ensure that latin subrectangles using 
elements of $A$, $B$ and $A\cup B$ can 
be constructed, conditions (2) and (3) guarantee that the partial latin square is completable,
condition (4) is intrinsic in the technique, and condition (5) is needed when permuting the rows.
}\end{technique}

Clearly, we can obtain $\mu$-way latin {\em trade}s using Technique \ref{tech3} and since we don't require the completablity of the partial latin square for latin trades, we can relax the condition $b\geq x+\mu-1$ to $b\geq \mu-1$ and obtain the following Proposition which is used for confining possible members of $I^4[n]$ in the next section.
%
\begin{proposition}{
\label{corollary2.2.4 Hadi Thesis}
For any $i \in \{0,\ldots,\mu\}$, there exists a $\mu$-way latin trade of volume
$s\in \{\mu(3\mu-i),\mu(3\mu-i)+1,\ldots, \mu(3\mu - i) + (\mu - i)\}$.
}\end{proposition}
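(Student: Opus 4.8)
The plan is to apply the Gear technique (Technique~\ref{tech3}) in its relaxed, trade-only form, in which the condition $b \geq x + \mu - 1$ is weakened to $b \geq \mu - 1$. Recall that in that construction one cyclically permutes the rows of a $y \times b$ subrectangle and of a $(y+x) \times a$ subrectangle; since these occupy disjoint sets of columns (the $b$ columns carrying $B$ and the $a$ columns carrying $A$) and every one of their cells changes under the permutation, they are exactly the trade part. Hence the volume of the resulting $\mu$-way latin trade is
$$ s = yb + (y+x)a = yn + xa, $$
using $a+b=n$. The five relaxed conditions to be maintained are $a \geq x+\mu-1$, $b \geq \mu-1$, $a+b=n$, $x \geq 1$, and $y \geq \mu$.

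First I would fix the two shape parameters at their extreme admissible values, $y=\mu$ and $x=1$, so that the volume becomes $s=\mu n + a$. The key observation is that with $x=1$, replacing $(a,b)$ by $(a+1,b-1)$ leaves $n$ unchanged while increasing $s$ by exactly $1$; this is precisely what sweeps out an interval of \emph{consecutive} volumes. For a given $i \in \{0,\ldots,\mu\}$ I would then set the order to
$$ n = 3\mu - i - 1, $$
and let $a$ run over $\mu \leq a \leq 2\mu - i$, with $b=n-a$ running correspondingly from $2\mu-i-1$ down to $\mu-1$.

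Next I would check that every such choice satisfies the relaxed Gear conditions: $a \geq \mu = x+\mu-1$ and $b \geq \mu-1$ hold throughout the stated range of $a$, while $x=1\geq 1$, $y=\mu\geq\mu$, and $a+b=n$ are immediate; the only global requirement $n \geq 2\mu-1$ reduces to $i \leq \mu$, which is the hypothesis. Substituting into $s=\mu n + a$ then gives, as $a$ ranges over $[\mu,\,2\mu-i]$, the consecutive values
$$ s \in \big[\,\mu(3\mu-i),\ \mu(3\mu-i)+(\mu-i)\,\big], $$
since the lower endpoint is $\mu n + \mu = \mu(3\mu-i)$ and the upper endpoint is $\mu n + (2\mu-i) = \mu(3\mu-i)+(\mu-i)$. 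This is exactly the claimed set of volumes; in particular the degenerate case $i=\mu$ forces $n=2\mu-1$ and the single value $2\mu^2$.

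The step I expect to be the genuine obstacle is not the arithmetic but the bookkeeping behind it: one must confirm that the relaxed Gear construction really does yield a bona fide $\mu$-way latin trade (each trade cell receiving $\mu$ distinct entries, with the row and column multisets preserved across all $\mu$ squares) even at the boundary of the parameter region, where $b$ is as small as $\mu-1$ and $n=2\mu-1$ is minimal. These degenerate shapes are exactly the ones realizing the small-volume end $s=2\mu^2$, so the validity of the technique there cannot be taken for granted and is what the bulk of the verification should address; once the construction is known to survive these extreme shapes, the remaining endpoint computation above is routine.
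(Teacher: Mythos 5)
Your proposal is correct and follows exactly the paper's own (sketched) proof: Technique~\ref{tech3} in its relaxed trade form with $x=1$, $y=\mu$, $n=3\mu-i-1$, and $a$ ranging over $\{\mu,\ldots,2\mu-i\}$, giving volume $s=\mu n+a$ which sweeps the claimed interval. Your added endpoint arithmetic and the caution about verifying the construction at the degenerate shapes ($b=\mu-1$, $n=2\mu-1$) are sound elaborations of what the paper leaves implicit.
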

\begin{proof}{{\bf (Sketch)}
For each $i \in \{0,\ldots,\mu\}$, consider Technique \ref{tech3} for generating latin trades, with the following parameters: \\
\begin{tabular}{cccl}
 & & & $x=1, n=3\mu-i-1, y=\mu \quad and \quad a \in \{\mu,\mu+1,\ldots,2\mu-i\}$
\end{tabular}
}
\end{proof}
As proved in \cite{MR1904722}, there exists a $\mu$-way latin trade of volume $s$
for any $s\geq 3\mu^2+\mu-1$. Hence we get the following
corollary.
\begin{corollary}{
\label{corollary2.3.1 Hadi Thesis}
For any $s\geq 3\mu^2-\mu$, there exists a $\mu$-way latin trade of volume $s$.
}\end{corollary}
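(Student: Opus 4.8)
The plan is to obtain the full half-line $\{s : s \geq 3\mu^2 - \mu\}$ as the union of three pieces of coverage: the cited bound and two special cases of the preceding Proposition. First I would recall that \cite{MR1904722} already guarantees a $\mu$-way latin trade of every volume $s \geq 3\mu^2 + \mu - 1$, so only the finitely many volumes in the window $[\,3\mu^2 - \mu,\ 3\mu^2 + \mu - 2\,]$ remain to be produced.

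Next I would invoke Proposition~\ref{corollary2.2.4 Hadi Thesis} twice. Taking $i = 0$ yields trades of every volume in $[\,3\mu^2,\ 3\mu^2 + \mu\,]$, and taking $i = 1$ yields trades of every volume in $[\,3\mu^2 - \mu,\ 3\mu^2 - 1\,]$. I would then check that these two blocks abut with no gap, since the top of the $i = 1$ block is $3\mu^2 - 1$ and the bottom of the $i = 0$ block is $3\mu^2$; together they cover the whole interval $[\,3\mu^2 - \mu,\ 3\mu^2 + \mu\,]$.

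Finally I would verify that this interval overlaps the cited half-line: its right endpoint $3\mu^2 + \mu$ exceeds $3\mu^2 + \mu - 1$, so the union is exactly $\{s : s \geq 3\mu^2 - \mu\}$, as required. The only point that needs genuine care — and the single place where the argument could fail — is the gap analysis. For a fixed $i$ the Proposition supplies a block of only $\mu - i + 1$ consecutive volumes starting at $\mu(3\mu - i)$, and a short computation shows that the block for index $i$ and the block for index $i-1$ are separated by a gap of size $i - 1$; thus for $i \geq 2$ consecutive blocks no longer meet. For this reason the argument must deliberately use only the two lowest cases $i = 0$ and $i = 1$, whose blocks do abut, rather than attempting to chain all $\mu + 1$ cases together. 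Confirming that these two cases together with the cited bound already leave no gap down to $3\mu^2 - \mu$ is the crux of the proof.
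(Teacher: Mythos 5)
Your proposal is correct and matches the paper's (implicit) argument: the corollary is obtained by combining the cited existence result for all $s\geq 3\mu^2+\mu-1$ with the $i=0$ and $i=1$ cases of Proposition~\ref{corollary2.2.4 Hadi Thesis}, whose blocks $[3\mu^2-\mu,\,3\mu^2-1]$ and $[3\mu^2,\,3\mu^2+\mu]$ abut and overlap the cited half-line. Your explicit gap analysis (showing why only $i=0,1$ can be chained) is a useful clarification of a step the paper leaves unstated.
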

%

\noindent The next technique is similar to Technique \ref{tech3}. As for Technique \ref{tech3},
we start with an example to explain the technique.
\begin{example}{
\label{example2.3.6 Hadi Thesis}
Since the following partial latin squares are completable to
a latin square of order $13$ we have $128\in I^4[13]$.
\def\arraystretch{.8}
$$
{\cal B}_1=
\begin{tabular}
{|@{\hspace{6pt}}c@{\hspace{6pt}}
|@{\hspace{6pt}}c@{\hspace{6pt}}
|@{\hspace{6pt}}c@{\hspace{6pt}}
|@{\hspace{6pt}}c@{\hspace{6pt}}
||@{\hspace{2.3pt}}c@{\hspace{2.3pt}}
|@{\hspace{2.3pt}}c@{\hspace{2.3pt}}
|@{\hspace{2.3pt}}c@{\hspace{2.3pt}}
|@{\hspace{2.3pt}}c@{\hspace{2.3pt}}
||@{\hspace{2.5pt}}c@{\hspace{2.5pt}}
|@{\hspace{2.5pt}}c@{\hspace{2.5pt}}
|@{\hspace{2.5pt}}c@{\hspace{2.5pt}}
|@{\hspace{2.5pt}}c@{\hspace{2.5pt}}
|@{\hspace{2.5pt}}c@{\hspace{2.5pt}}|}
\ &\ &\ &\ &\ &\ &\ &\ &\ &\ &\ &\ &\ \\[-1.5 mm] \hline
\ &\ &\ &\ &\ &\ &\ &\ &\ &\ &\ &\ &\ \\ \hline
\ &\ &\ &\ &\ &\ &\ &\ &$\bf 1$ &$\bf 2$ &$\bf 3$ &$\bf 4$ &$\bf 5$ \\ \hline
\ &\ &\ &\ &$6$ &$7$ &$8$ &$9$ &$\bf 5$ &$\bf 1$ &$\bf 2$ &$\bf 3$ &$\bf 4$ \\
\hline
\ &\ &\ &\ &$9$ &$6$ &$7$ &$8$ &$\bf 4$ &$\bf 5$ &$\bf 1$ &$\bf 2$ &$\bf 3$\\
\hline
\ &\ &\ &\ &$8$ &$9$ &$6$ &$7$ &$\bf 3$ &$\bf 4$ &$\bf 5$ &$\bf 1$ &$\bf 2$ \\
\hline
\ &\ &\ &\ &$7$ &$8$ &$2$ &$3$ &$\bf 9$ &$\bf 6$ &$\bf 4$ &$\bf 5$ &$\bf 1$  \\
 \hline
\end{tabular}
\ {\cal B}_2=
\begin{tabular}
{|@{\hspace{6pt}}c@{\hspace{6pt}}
|@{\hspace{6pt}}c@{\hspace{6pt}}
|@{\hspace{6pt}}c@{\hspace{6pt}}
|@{\hspace{6pt}}c@{\hspace{6pt}}
||@{\hspace{2.3pt}}c@{\hspace{2.3pt}}
|@{\hspace{2.3pt}}c@{\hspace{2.3pt}}
|@{\hspace{2.3pt}}c@{\hspace{2.3pt}}
|@{\hspace{2.3pt}}c@{\hspace{2.3pt}}
||@{\hspace{2.5pt}}c@{\hspace{2.5pt}}
|@{\hspace{2.5pt}}c@{\hspace{2.5pt}}
|@{\hspace{2.5pt}}c@{\hspace{2.5pt}}
|@{\hspace{2.5pt}}c@{\hspace{2.5pt}}
|@{\hspace{2.5pt}}c@{\hspace{2.5pt}}|}
\ &\ &\ &\ &\ &\ &\ &\ &\ &\ &\ &\ &\ \\[-1.5 mm] \hline
\ &\ &\ &\ &\ &\ &\ &\ &\ &\ &\ &\ &\ \\ \hline
\ &\ &\ &\ &\ &\ &\ &\ &$\bf 5$ &$\bf 1$ &$\bf 2$ &$\bf 3$ &$\bf 4$ \\ \hline
\ &\ &\ &\ &$9$ &$6$ &$7$ &$8$ &$\bf 4$ &$\bf 5$ &$\bf 1$ &$\bf 2$ &$\bf 3$ \\
\hline
\ &\ &\ &\ &$8$ &$9$ &$6$ &$7$ &$\bf 3$ &$\bf 4$ &$\bf 5$ &$\bf 1$ &$\bf 2$ \\
\hline
\ &\ &\ &\ &$7$ &$8$ &$2$ &$3$ &$\bf 9$ &$\bf 6$ &$\bf 4$ &$\bf 5$ &$\bf 1$ \\
\hline
\ &\ &\ &\ &$6$ &$7$ &$8$ &$9$ &$\bf 1$ &$\bf 2$ &$\bf 3$ &$\bf 4$ &$\bf 5$ \\
\hline
\end{tabular}
$$
$$
{\cal B}_3=
\vspace{.5cm}
\begin{tabular}
{|@{\hspace{6pt}}c@{\hspace{6pt}}
|@{\hspace{6pt}}c@{\hspace{6pt}}
|@{\hspace{6pt}}c@{\hspace{6pt}}
|@{\hspace{6pt}}c@{\hspace{6pt}}
||@{\hspace{2.3pt}}c@{\hspace{2.3pt}}
|@{\hspace{2.3pt}}c@{\hspace{2.3pt}}
|@{\hspace{2.3pt}}c@{\hspace{2.3pt}}
|@{\hspace{2.3pt}}c@{\hspace{2.3pt}}
||@{\hspace{2.5pt}}c@{\hspace{2.5pt}}
|@{\hspace{2.5pt}}c@{\hspace{2.5pt}}
|@{\hspace{2.5pt}}c@{\hspace{2.5pt}}
|@{\hspace{2.5pt}}c@{\hspace{2.5pt}}
|@{\hspace{2.5pt}}c@{\hspace{2.5pt}}|}
\ &\ &\ &\ &\ &\ &\ &\ &\ &\ &\ &\ &\ \\[-1.5 mm] \hline
\ &\ &\ &\ &\ &\ &\ &\ &\ &\ &\ &\ &\ \\ \hline
\ &\ &\ &\ &\ &\ &\ &\ &$\bf 4$ &$\bf 5$ &$\bf 1$ &$\bf 2$ &$\bf 3$ \\ \hline
\ &\ &\ &\ &$8$ &$9$ &$6$ &$7$ &$\bf 3$ &$\bf 4$ &$\bf 5$ &$\bf 1$ &$\bf 2$ \\
\hline
\ &\ &\ &\ &$7$ &$8$ &$2$ &$3$ &$\bf 9$ &$\bf 6$ &$\bf 4$ &$\bf 5$ &$\bf 1$ \\
\hline
\ &\ &\ &\ &$6$ &$7$ &$8$ &$9$ &$\bf 1$ &$\bf 2$ &$\bf 3$ &$\bf 4$ &$\bf 5$ \\
\hline
\ &\ &\ &\ &$9$ &$6$ &$7$ &$8$ &$\bf 5$ &$\bf 1$ &$\bf 2$ &$\bf 3$ &$\bf 4$ \\
\hline
\end{tabular}
\ {\cal B}_4=
\begin{tabular}
{|@{\hspace{6pt}}c@{\hspace{6pt}}
|@{\hspace{6pt}}c@{\hspace{6pt}}
|@{\hspace{6pt}}c@{\hspace{6pt}}
|@{\hspace{6pt}}c@{\hspace{6pt}}
||@{\hspace{2.3pt}}c@{\hspace{2.3pt}}
|@{\hspace{2.3pt}}c@{\hspace{2.3pt}}
|@{\hspace{2.3pt}}c@{\hspace{2.3pt}}
|@{\hspace{2.3pt}}c@{\hspace{2.3pt}}
||@{\hspace{2.5pt}}c@{\hspace{2.5pt}}
|@{\hspace{2.5pt}}c@{\hspace{2.5pt}}
|@{\hspace{2.5pt}}c@{\hspace{2.5pt}}
|@{\hspace{2.5pt}}c@{\hspace{2.5pt}}
|@{\hspace{2.5pt}}c@{\hspace{2.5pt}}|}
\ &\ &\ &\ &\ &\ &\ &\ &\ &\ &\ &\ &\ \\[-1.5 mm] \hline
\ &\ &\ &\ &\ &\ &\ &\ &\ &\ &\ &\ &\ \\ \hline
\ &\ &\ &\ &\ &\ &\ &\ &$\bf 3$ &$\bf 4$ &$\bf 5$ &$\bf 1$ &$\bf 2$ \\ \hline
\ &\ &\ &\ &$7$ &$8$ &$2$ &$3$ &$\bf 9$ &$\bf 6$ &$\bf 4$ &$\bf 5$ &$\bf 1$ \\
\hline
\ &\ &\ &\ &$6$ &$7$ &$8$ &$9$ &$\bf 1$ &$\bf 2$ &$\bf 3$ &$\bf 4$ &$\bf 5$ \\
\hline
\ &\ &\ &\ &$9$ &$6$ &$7$ &$8$ &$\bf 5$ &$\bf 1$ &$\bf 2$ &$\bf 3$ &$\bf 4$ \\
\hline
\ &\ &\ &\ &$8$ &$9$ &$6$ &$7$ &$\bf 4$ &$\bf 5$ &$\bf 1$ &$\bf 2$ &$\bf 3$ \\
\hline
\end{tabular}
$$
}\end{example}
\begin{technique}{{\bf [Gear technique 2]}
\label{tech4}
\noindent Similar to Technique \ref{tech3}, we take a completable partial latin square of order $n$
 (the following figure) where $A=\{1,\ldots,a\}$ and $B=\{a+1,\ldots,a+b\}$.
Then by permuting the rows of the two subrectangles of order $a\times (y+x)$
and $b\times y$, we
obtain a $\mu$-way latin square~of order $n$ with $n^2-(a(x+y)+by)$ fixed cells.\\
%
\def\sp{\hspace{1.2cm}}
\def\spp{\hspace{2.4cm}}
\def\d{\displaystyle}
\def\arraystretch{1}
$$
\begin{array}{|c|c|c|cc}
\cline{1-3}
\multicolumn{3}{|c|}{}\\[3mm]
\multicolumn{2}{|c}{}&\multicolumn{1}{r|}{\hspace{-1.5mm}\overbrace{\spp}^{{\d a}}\hspace{-1.5mm}}\\
\cline{3-3}
\multicolumn{2}{|c|}{}&\multicolumn{1}{|c|}{}&
\hspace{-11mm}\left.\begin{array}{c}
\\[.5cm]
\end{array}\right\}x\\[-7.5mm]
\multicolumn{1}{|c}{\sp}&\multicolumn{1}{c}{\hspace{-1.5mm}\overbrace{\sp}^{{\d b}}\hspace{-1.5mm}}&\multicolumn{1}{|c|}{}\\
\cline{2-2}
&&&
\hspace{-4mm}\left.\begin{array}{c}
\\[1cm]
\end{array}\right\}\mu-1\\
\cline{2-3}
&\multicolumn{2}{|c|}{}\\[6mm]
\cline{1-3}
\multicolumn{4}{c}{}\\[-6mm]
\multicolumn{1}{c}{\underbrace{\spp}_{\d c}}\\[-31.5mm]
\multicolumn{4}{c}{}&
\hspace{-5mm}\left.\begin{array}{c}
\\[2.2cm]
\end{array}\right\}y\\[-1.8cm]
\multicolumn{1}{c}{}&\multicolumn{1}{c}{B}\\[-1.1cm]
\multicolumn{1}{c}{}&\multicolumn{1}{r}{}&\multicolumn{1}{c}{A}\\[1.2cm]
\multicolumn{1}{c}{}&\multicolumn{2}{c}{A\cup B}
\end{array}
$$\\

\vskip 8mm
Parallel to Technique \ref{tech3}, sufficient conditions for having a completable partial latin square suitable for this technique are the following.
\begin{enumerate}{
 \item[(1)] $a,b\geq x+\mu-1$,
 \item[(2)] $a+b+c=n$,
 \item[(3)] $x\geq 1$,
 \item[(4)] $c \geq y\geq \mu$,
 \item[(5)] $a+b\geq x+y$ and
}\end{enumerate}
Note that since the elements appearing in the bottom-left subrectangle of order $c \times y$ cannot be from the set $A \cup B$ we should have $c \geq y$ (see (4) above) for the partial latin square to be completable. 
}\end{technique}


\section{Main results}
\label{sec: main-results}


In this section first we introduce some notations. Then we give some proofs of the existence and non-existence of our results on $4$-way intersection problem. These will lead to the proof of our main theorem stated at the end of this section.
\\

For each $n\geq 1$, define
$$J^4[n]=[0,n^2-27] \cup \{n^2-25,n^2-24,n^2-23,n^2-20,n^2-16,n^2\}.$$

Since the possible volumes for $4$-way latin trades
are $N\setminus([1,15]\cup \{17,18,19,21,22,26\})$ (see \cite{MR1904722}) we have $I^4[n]\subseteq J^4[n]$.

The following lemmata are generalizations of Corollary 2.1 and Corollary 2.2 of \cite{MR1874724}:

\begin{lemma}
\label{lemma: J: n->2n}
If $I^4[n] \supseteq [0 , \lceil n^2/2 \rceil] $, then
$I^4[2n] \supseteq   [0 , 3n^2 ] \cup \{I^4[n] + \{3n^2\}\}.$
\end{lemma}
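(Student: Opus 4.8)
The plan is to derive everything from Proposition~\ref{theorem2.3.1 Hadi Thesis} specialized to $\mu=4$, which gives
\[
I^4[2n]\supseteq\bigl(I^4[n]+I^4[n]+I^4[n]+I^4[n]\bigr)\cup\bigcup_{i=1}^{n}\bigl(I^4[i]+\{(2n)^2-i^2\}\bigr).
\]
The second piece of the claim is then immediate: taking $i=n$ in the union term gives $I^4[n]+\{(2n)^2-n^2\}=I^4[n]+\{3n^2\}\subseteq I^4[2n]$, so no further work is needed there.

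For the interval $[0,3n^2]$ I would work entirely inside the four-fold sumset, using only two explicit sources of members of $I^4[n]$: the whole range $[0,\lceil n^2/2\rceil]$ supplied by the hypothesis, and the full-intersection value $n^2$ (four identical copies of any latin square of order $n$). Writing $m=\lceil n^2/2\rceil$, so that $m\geq n^2/2$, the key observation is that restricting each of the four summands to $[0,m]\cup\{n^2\}\subseteq I^4[n]$ already realises every integer in $[0,3n^2]$.

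I would verify this by splitting the target $t$ into two overlapping ranges. If $t\in[0,2n^2]$, then since $[0,m]+[0,m]+[0,m]+[0,m]=[0,4m]$ and $4m\geq 2n^2$, the value $t$ is a sum of four elements of $[0,m]$. If instead $t\in[2n^2,3n^2]$, write $t=2n^2+s$ with $s\in[0,n^2]$; because $2m\geq n^2\geq s$ we can split $s=a+b$ with $a,b\in[0,m]$, so that $t=n^2+n^2+a+b$ is a sum of four members of $I^4[n]$. The two ranges meet at $2n^2$, hence together they cover all of $[0,3n^2]$, which completes the argument.

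The argument is essentially a covering/sumset computation, so there is no deep obstacle; the only thing to get right is that the two cases chain together without a gap, which comes down to the elementary inequalities $4\lceil n^2/2\rceil\geq 2n^2$ and $2\lceil n^2/2\rceil\geq n^2$ (both following from $\lceil x\rceil\geq x$). The one point to flag is that $n^2$ must be shown to lie in $I^4[n]$ independently of the hypothesis, since $n^2$ need not belong to $[0,\lceil n^2/2\rceil]$; this is the sole place where we invoke a construction outside the given interval, namely the trivial all-identical one.
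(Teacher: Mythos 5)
Your proof is correct and follows essentially the same route the paper intends: the lemma is a direct consequence of Proposition \ref{theorem2.3.1 Hadi Thesis} (the four-quadrant sumset term for $[0,3n^2]$ and the $i=n$ embedding term for $I^4[n]+\{3n^2\}$), which is exactly how the corresponding corollary is proved in the $3$-way paper the authors generalize. Your covering of $[0,3n^2]$ by four summands from $[0,\lceil n^2/2\rceil]\cup\{n^2\}$, together with the observation that $n^2\in I^4[n]$ via four identical latin squares, is sound and complete.
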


\begin{lemma}
\label{lemma: J: n->2n+1}
If $n \ge 4$ and
$I^4[n] \supseteq [0 , 7n+4] $, then
$$I^4[2n+1]   \supseteq
[0 , (2n+1)^2-n^2 ] \cup \{I^4[n] + \{(2n+1)^2-n^2\}\} .$$
\end{lemma}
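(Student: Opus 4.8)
The plan is to prove Lemma~\ref{lemma: J: n->2n+1} by constructing, for each target intersection value $k \in [0,(2n+1)^2-n^2]$, an explicit $4$-way latin square of order $2n+1$ with $k$ fixed cells, and to handle the values in $I^4[n]+\{(2n+1)^2-n^2\}$ by a separate embedding argument. Observe first that $(2n+1)^2 - n^2 = 3n^2+4n+1$, and that a latin square of order $i$ with $i \le n$ contributes $(2n+1)^2-i^2$ fixed cells when embedded; the case $i=n$ of the remark preceding Lemma~\ref{theorem2.3.2 Hadi Thesis} already gives $\bigcup_{i=1}^n (I^4[i]+\{(2n+1)^2-i^2\}) \subseteq I^4[2n+1]$, so the summand $\{I^4[n]+\{(2n+1)^2-n^2\}\}$ is immediate from the $i=n$ term of $X$ in that lemma. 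Thus the real content is the interval $[0,(2n+1)^2-n^2]$.

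**Reducing the interval to existing tools.**
First I would split the target interval at the value $n^2$, since the hypothesis gives us $I^4[n]\supseteq [0,7n+4]$, and $7n+4 \ge \lceil n^2/2\rceil$ fails for large $n$ but the hypothesis is deliberately weaker here. The key structural fact to exploit is Technique~\ref{tech1} as quantified in Lemma~\ref{theorem2.3.2 Hadi Thesis}: that lemma produces $I^4[2n+1] \supseteq \{I^4[n]+(n+1)(\cdots)+C\}\cup X$, and the set $C$ is engineered precisely so that, as the free parameters range, the attainable fixed-cell counts cover a long initial interval. So the plan is to show that under the stronger-looking hypothesis $I^4[n]\supseteq[0,7n+4]$, the combination $I^4[n]+(n+1)\{[0,n-4]\cup\{n\}\}+C$ already sweeps out all of $[0,(2n+1)^2-n^2]$. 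Concretely, I would first verify that $C$ together with the trade-volume results (Corollary~\ref{corollary2.3.1 Hadi Thesis}, giving every trade volume $s\ge 3\mu^2-\mu = 44$ for $\mu=4$) yields every residue and covers the top end up to $3n^2+4n+1$, then patch the small values using $I^4[n]\supseteq[0,7n+4]$ as the base set being translated.

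**Carrying out the covering.**
The middle of the interval I would cover by choosing, for each target $k$, a trade of the appropriate volume $v = (2n+1)^2 - k$ and realizing it inside the order-$(2n+1)$ square via the gear constructions (Techniques~\ref{tech3} and~\ref{tech4}) or via Technique~\ref{tech1}; since every admissible volume $v \notin [1,15]\cup\{17,18,19,21,22,26\}$ is a trade volume, and all such forbidden $v$ correspond to $k$ near the top of the interval, I only need the base hypothesis $I^4[n]\supseteq[0,7n+4]$ to fill the finitely many exceptional high-$k$ values where a pure-trade realization is blocked. The bound $7n+4$ is exactly the width needed: translating $[0,7n+4]$ by the smallest nonzero elements of $C$ (the values $2n+1$ and $n+1$ appear in $C$) overlaps consecutive translates, so the union is gap-free. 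I would check the overlap inequality $7n+4 \ge (\text{gap between consecutive achievable offsets}) - 1$ explicitly for the offsets coming from $(n+1)\{[0,n-4]\cup\{n\}\}$ and from the $(2n+1)\{[0,n-3]\}$ block of $C$.

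**Main obstacle.**
The hard part will be the bookkeeping at the two seams: first, the junction near $k=n^2$ where the low-order embedding values $(2n+1)^2-i^2$ must dovetail with the gear/trade-generated values; and second, the top of the interval near $k=(2n+1)^2-n^2$, where the forbidden trade volumes $\{17,18,19,21,22,26\}$ and the interval $[1,15]$ create the only genuine gaps, which must be filled by the translated base set $I^4[n]+\{\text{small offset}\}$ rather than by a trade. I expect that verifying these finitely many exceptional values are reached—using $n\ge 4$ to guarantee enough room in $C$ and in the gear parameters—will be the delicate step, whereas the bulk of the interval follows routinely once the width hypothesis $7n+4$ is shown to exceed the maximal offset gap.
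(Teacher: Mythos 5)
Your overall route is the intended one: the paper gives no written proof of this lemma (it is presented as a generalization of Corollary 2.2 of \cite{MR1874724}), and the proof is exactly an interval-covering computation applied to the sumset of Lemma \ref{theorem2.3.2 Hadi Thesis}, with the second summand of the conclusion coming from the $i=n$ term of $X$, as you note. However, two parts of your plan do not work as written. First, the step ``choose a trade of volume $v=(2n+1)^2-k$ (Corollary \ref{corollary2.3.1 Hadi Thesis}) and realize it inside the order-$(2n+1)$ square'' is a genuine gap: the existence of a $4$-way latin trade of volume $v$ does not imply the existence of a $4$-way latin square of order $2n+1$ with $(2n+1)^2-v$ fixed cells, because the trade must in addition embed so that its complement can be completed identically in all four squares. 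That implication is precisely what fails in general --- it is why $I^4[n]\subseteq J^4[n]$ is only an upper bound and why the paper needs its constructive techniques at all; if it held, the main theorem would be immediate. Second, the ``seams'' you single out are artifacts of conflating the intersection size $k$ with the trade volume $(2n+1)^2-k$: the embedding values $(2n+1)^2-i^2$ all lie at or above $3n^2+4n+1$, the top of the target interval, so there is no junction near $k=n^2$; and the forbidden volumes $[1,15]\cup\{17,18,19,21,22,26\}$ obstruct only those $k$ within $26$ of $(2n+1)^2$, far outside $[0,3n^2+4n+1]$, so they create no gaps in the interval you must cover.

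What is actually needed, and what your proposal defers, is a single overlap computation. Write $m=7n+4$ and use only Lemma \ref{theorem2.3.2 Hadi Thesis}. Since $n+1\le m+1$, we get $[0,m]+(n+1)[0,n-4]=[0,\,n^2-3n-4+m]$; since $4(n+1)\le m+1$, adjoining the offset $n(n+1)$ extends this to $[0,\,n^2+n+m]$; adding the block $(2n+1)[0,n-3]\subseteq C$ extends it to $[0,\,3n^2-4n-3+m]=[0,\,3n^2+3n+1]$; and finally the element $(n+1)(2n+1)=2n^2+3n+1$ of $C$ glues on $[2n^2+3n+1,\,3n^2+4n+1+m]$, which meets the previous interval (as $2n^2+3n+1\le 3n^2+3n+2$) and covers the remaining stretch up to $3n^2+4n+1$. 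This short computation is the entire proof of the interval part (and shows, incidentally, that the hypothesis $[0,7n+4]$ is not tight); once the trade-volume detour is replaced by it, your argument is complete.
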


Now, these two corollaries are immediate:

\begin{corollary}{
\label{corollary: J: n->2n}
If $I^4[n] = J^4[n]$ and
$I^4[n] \supseteq [0 , \lceil n^2/2 \rceil] $, then
$I^4[2n] = J^4[2n].$
}\end{corollary}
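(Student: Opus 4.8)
The plan is to establish $I^4[2n]=J^4[2n]$ by proving the two inclusions separately. The inclusion $I^4[2n]\subseteq J^4[2n]$ comes for free: as noted just after the definition of $J^4$, every $4$-way latin square of order $m$ gives rise to a $4$-way latin trade of volume $s=m^2-k$, and since the realizable trade volumes are exactly $N\setminus([1,15]\cup\{17,18,19,21,22,26\})$, we always have $I^4[m]\subseteq J^4[m]$; applying this with $m=2n$ yields the desired direction. Thus the whole content lies in the reverse inclusion $I^4[2n]\supseteq J^4[2n]$.

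For the reverse inclusion I would feed the hypothesis $I^4[n]\supseteq[0,\lceil n^2/2\rceil]$ into Lemma~\ref{lemma: J: n->2n}, which gives
$$I^4[2n]\supseteq [0,3n^2]\cup\bigl(I^4[n]+\{3n^2\}\bigr).$$
Now I would invoke the second hypothesis $I^4[n]=J^4[n]$ and replace $I^4[n]$ by the explicit set $[0,n^2-27]\cup\{n^2-25,n^2-24,n^2-23,n^2-20,n^2-16,n^2\}$. Shifting this by $3n^2$ turns the interval part into $[3n^2,4n^2-27]$ and the six exceptional singletons into $\{4n^2-25,4n^2-24,4n^2-23,4n^2-20,4n^2-16,4n^2\}$.

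It then remains to check that $[0,3n^2]\cup[3n^2,4n^2-27]$ together with these six shifted points equals $J^4[2n]=[0,4n^2-27]\cup\{4n^2-25,4n^2-24,4n^2-23,4n^2-20,4n^2-16,4n^2\}$. The two intervals abut at $3n^2$ and merge into $[0,4n^2-27]$ when $n^2\geq 27$, while for smaller $n$ the second interval is degenerate and already contained in $[0,3n^2]\supseteq[0,4n^2-27]$, so $[0,4n^2-27]$ is obtained in every case; the six singletons are precisely the discrete top part of $J^4[2n]$. Combining this with the automatic inclusion gives equality. The only delicate point---and the nearest thing to an obstacle in an otherwise purely bookkeeping argument---is verifying that the shift by $3n^2$ carries the near-top gap pattern of $J^4[n]$ onto the near-top gap pattern of $J^4[2n]$ exactly, so that no spurious value in the range $(4n^2-27,4n^2)$ is claimed and none of the six required values is missed; this is guaranteed because $J^4$ has the identical exceptional structure at the top for every order.
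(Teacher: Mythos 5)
Your proposal is correct and is exactly the argument the paper intends: the paper states this corollary as ``immediate'' from Lemma~\ref{lemma: J: n->2n} together with the general upper bound $I^4[m]\subseteq J^4[m]$, and your write-up simply makes that bookkeeping explicit (including the harmless check that $[0,3n^2]$ and $[3n^2,4n^2-27]$ merge and that the shift by $3n^2$ reproduces the top gap pattern of $J^4[2n]$). Nothing further is needed.
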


\begin{corollary}{
\label{corollary: J: n->2n+1}
If $n \ge 4$,  $I^4[n] = J^4[n]$ and
$I^4[n] \supseteq [0 , 7n+4] $, then
$I^4[2n+1] = J^4[2n+1].$
}\end{corollary}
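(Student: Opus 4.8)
The plan is to reduce everything to Lemma~\ref{lemma: J: n->2n+1} and then do a single translation computation. Recall that the containment $I^4[2n+1]\subseteq J^4[2n+1]$ always holds, since the admissible volumes of a $4$-way latin trade are exactly $\mathbb{N}\setminus([1,15]\cup\{17,18,19,21,22,26\})$ (see \cite{MR1904722}) and a fixed-cell count of $k$ corresponds to a trade of volume $(2n+1)^2-k$. Hence it suffices to establish the reverse inclusion $I^4[2n+1]\supseteq J^4[2n+1]$.

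The hypotheses of the corollary ($n\ge 4$ and $I^4[n]\supseteq[0,7n+4]$) are precisely those of Lemma~\ref{lemma: J: n->2n+1}, so I would first invoke that lemma to obtain
$$I^4[2n+1]\supseteq [0,(2n+1)^2-n^2]\cup\big(I^4[n]+\{(2n+1)^2-n^2\}\big).$$
Next I would substitute the remaining hypothesis $I^4[n]=J^4[n]$ into the second term, and record the key arithmetic identity $(2n+1)^2-n^2=3n^2+4n+1$, together with $(2n+1)^2=4n^2+4n+1$, which I will use to rewrite the shifted values relative to $(2n+1)^2$.

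The heart of the argument is the explicit translation of $J^4[n]$ by $(2n+1)^2-n^2$. The interval piece $[0,n^2-27]$ shifts to $[(2n+1)^2-n^2,\,(2n+1)^2-27]$, whose left endpoint coincides with the right endpoint of the base interval $[0,(2n+1)^2-n^2]$; the two therefore abut and their union is exactly $[0,(2n+1)^2-27]$. The six exceptional values $\{n^2-25,n^2-24,n^2-23,n^2-20,n^2-16,n^2\}$ shift to $\{(2n+1)^2-25,(2n+1)^2-24,(2n+1)^2-23,(2n+1)^2-20,(2n+1)^2-16,(2n+1)^2\}$, which are precisely the six exceptional values appearing in $J^4[2n+1]$. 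Combining the two descriptions gives $I^4[2n+1]\supseteq J^4[2n+1]$, and together with the trivial inclusion this yields equality.

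The only real work is this bookkeeping verification, and it is where I expect the one subtle point to lie: I must confirm that the shifted small-interval and the base interval meet at $(2n+1)^2-n^2$ without leaving a gap (true because the lemma's base interval extends exactly up to that value), and that the six shifted exceptional values land on the correct top residues of $(2n+1)^2$ while producing no spurious element inside the forbidden band $\{(2n+1)^2-1,\dots,(2n+1)^2-26\}\setminus\{(2n+1)^2-16,(2n+1)^2-20,(2n+1)^2-23,(2n+1)^2-24,(2n+1)^2-25\}$. Since both the interval endpoints and the exceptional offsets match identically under the shift, no forbidden value is introduced, and the reconstruction of $J^4[2n+1]$ is exact rather than merely approximate.
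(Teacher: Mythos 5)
Your proposal is correct and follows exactly the route the paper intends: the paper states that this corollary is ``immediate'' from Lemma~\ref{lemma: J: n->2n+1}, and your argument simply fills in that routine step (apply the lemma, substitute $I^4[n]=J^4[n]$, and check that translating $J^4[n]$ by $(2n+1)^2-n^2$ and adjoining $[0,(2n+1)^2-n^2]$ reproduces $J^4[2n+1]$, with the reverse inclusion coming from the known spectrum of $4$-way latin trade volumes). The bookkeeping is accurate, so nothing is missing.
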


\subsection{Proofs of existence} \label{proofs of existence}
Here, we mention the method of obtaining non-trivial
members of $I^4[n]$, for each $n \geq 4$.

\begin{itemize}
\item $n=4$:
\begin{itemize}
\item[$\diamond$] $\{0\}$: by Lemma~\ref{lemma2.2.2 Hadi Thesis}.
\end{itemize}

\item $n=5$:
\begin{itemize}
\item[$\diamond$] $\{0,5\}$: by Lemma~\ref{lemma2.2.2 Hadi Thesis}
\item[$\diamond$] $\{1\}$: computer search.
\end{itemize}

\item $n=6$:
\begin{itemize}
\item[$\diamond$] $\{0,6,12\}$: by Lemma~\ref{lemma2.2.2 Hadi Thesis}
\item[$\diamond$] $\{1,2,3,4,5,7,8,11\}$: computer search.
\end{itemize}

\item $n=7$:
\begin{itemize}
\item[$\diamond$] $\{0,7,14,21\}$: by Lemma~\ref{lemma2.2.2 Hadi Thesis}
\item[$\diamond$] $\{1,2,3,4,5,6,8,9,10,11,12,13,15,16,17,19\}$: computer search.
\end{itemize}

\item $n=8$:
\begin{itemize}
\item[$\diamond$] $\{0,8,16,24,32\}:$ by Lemma~\ref{lemma2.2.2 Hadi Thesis}
\item[$\diamond$] $\{1,2,3,4,5,6,7,9,10,11,12,13,14,15,17,18,19,20,21,22,23,25,28,33\}$: computer search
\item[$\diamond$] $\{36\}$: Example \ref{example2.3.4 Hadi Thesis}
\item[$\diamond$] $\{48\}$: by Proposition \ref{theorem2.3.1 Hadi Thesis}.
\end{itemize}

\item $n=9$:
\begin{itemize}
\item[$\diamond$] $\{0,9,18,27,36,45\}$: by Lemma~\ref{lemma2.2.2 Hadi Thesis}
\item[$\diamond$] $\{2,3,4,6,7,8,10,11,12,13,14,15\}$: computer search
\item[$\diamond$] $\{1,5,16,17,20,21,25,29,37,41,61,65\}$: by Technique \ref{tech1}
\item[$\diamond$] $\{22,23,31,32,40\}$: by Technique \ref{tech3}.
\end{itemize}

\item $n=10$:
\begin{itemize}
\item[$\diamond$] $\{0,10,20,30,40,50,60\}$: by Lemma~\ref{lemma2.2.2 Hadi Thesis}
\item[$\diamond$] $\{1,2,3,4,5,6,7,8,11,12,15,16,25,26,27,28,31,32,35\}\cup$
\par\noindent $\{36,51,52,55,56,57,61,75,76,80,84\}$: by Proposition \ref{theorem2.3.1 Hadi Thesis}
\item[$\diamond$] $\{14,24,34,44,45,46,54\}$: by Technique \ref{tech3}.
\item[$\diamond$] $\{77\}$: computer search
\end{itemize}

\item $n=11$:
\begin{itemize}
\item[$\diamond$] $\{4,21,38,45,50,51,54,59,60,62,65,70\}$: by Technique \ref{tech3}
\item[$\diamond$] remaining elements of $I^4[11] \setminus R^4[11]:$ by Technique \ref{tech1}.
\end{itemize}

\item $n=12$:
\begin{itemize}
\item[$\diamond$] $\{117,121\}$: computer search
\item[$\diamond$] remaining elements of $I^4[12] \setminus R^4[12]:$ by Proposition \ref{theorem2.3.1 Hadi Thesis}.
\end{itemize}

\item $n=13$:
\begin{itemize}
\item[$\diamond$] $\{146\}$: computer search
\item[$\diamond$] $\{128,129\}$: by Technique \ref{tech4}
\item[$\diamond$] remaining elements of $I^4[13] \setminus R^4[13]:$ by Technique \ref{tech1}.
\end{itemize}

\item $n=14$:
\begin{itemize}
\item[$\diamond$] $\{169,173\}$: computer search
\item[$\diamond$] $\{135\}$: by Technique \ref{tech3}
\item[$\diamond$] $\{146\}$: by Technique \ref{tech4}
\item[$\diamond$] remaining elements of $I^4[14] \setminus R^4[14]:$ by Proposition \ref{theorem2.3.1 Hadi Thesis}.
\end{itemize}

\item $n=15$:
\begin{itemize}
\item[$\diamond$] $\{202\}$: by adding fixed cells, one can obtain a $4$-way latin rectangle of $5\times 15$ from the
the $4$-way latin rectangle found for the case $77\in I^4[10]$ in the Appendix. Then, obviously, adding
fixed rows to this $4$-way rectangle result in $202\in I^4[15]$.
\item[$\diamond$] $\{198\}$: computer search
\item[$\diamond$] $\{160\}$: by Technique \ref{tech3}
\item[$\diamond$] $\{170,171,173,174,175\}$: by Technique \ref{tech4}
\item[$\diamond$] remaining elements of $I^4[15] \setminus R^4[15]:$ by Technique \ref{tech1}.
\end{itemize}

\item $n=16$:
\begin{itemize}
\item[$\diamond$] $\{233\}$: similar to $202\in I^4[15]$ argument
\item[$\diamond$] $\{229\}$: computer search
\item[$\diamond$] $I^4[16]:$ by Proposition \ref{theorem2.3.1 Hadi Thesis}.
\end{itemize}

\item $n=17$:
\begin{itemize}
\item[$\diamond$] $\{266\}$: similar to $202\in I^4[15]$ argument
\item[$\diamond$] $\{262\}$: by adding fixed cells, one can obtain a $4$-way latin rectangle of $5\times 17$ from the
the $4$-way latin rectangle found for the case $117\in I^4[12]$ in the Appendix. Then, obviously, adding
fixed rows to this $4$-way rectangle result in $262\in I^4[17]$.
\item[$\diamond$] $\{215\}$: by Technique \ref{tech3}
\item[$\diamond$] $\{218,223,224\}$: by Technique \ref{tech4}
\item[$\diamond$] $\{220\}$: Using the latin rectangle $(17,220)$ shown in Appendix as
input, we can make this intersection with a technique similar to
Technique \ref{tech4}. This latin rectangle has three row permuting
subrectangles. Note that the first row of one of these subrectangles
is separate from other rows of it.
\item[$\diamond$] remaining elements of $I^4[17]:$ by Technique \ref{tech1}.
\end{itemize}

\item $n=18$:
\begin{itemize}
\item[$\diamond$] $\{301\}$: similar to $202\in I^4[15]$ argument
\item[$\diamond$] $\{297\}$: similar to $262\in I^4[17]$ argument
\item[$\diamond$] remaining elements of $I^4[18]:$ by Proposition \ref{theorem2.3.1 Hadi Thesis}.
\end{itemize}

\item $n=19$:
\begin{itemize}
\item[$\diamond$] $\{338\}$: similar to $202\in I^4[15]$ argument
\item[$\diamond$] $\{334\}$: similar to $262\in I^4[17]$ argument
\item[$\diamond$] $\{264,273,274,278,279\}$: by Technique \ref{tech3}
\item[$\diamond$] $\{268\}$: Using the latin rectangle $(19,268)$ shown in Appendix as
input, we can make this intersection with a technique similar to
Technique \ref{tech4}. This latin rectangle has three row permuting
subrectangles.
\item[$\diamond$] remaining elements of $I^4[17]$ by Technique \ref{tech1}.
\end{itemize}

\item $n=20$:
\begin{itemize}
\item[$\diamond$] $\{373\}$: similar to $262\in I^4[17]$ argument
\item[$\diamond$] remaining elements of $I^4[20]:$ by Proposition \ref{theorem2.3.1 Hadi Thesis}.
\end{itemize}

\item $n=21$:
\begin{itemize}
\item[$\diamond$] $\{414\}$: similar to $262\in I^4[17]$ argument
\item[$\diamond$] $\{354,358\}$: computer search
\item[$\diamond$] remaining elements of $I^4[21]:$ by Technique \ref{tech1}.
\end{itemize}

\item $n=22$:
\begin{itemize}
\item[$\diamond$] $\{457\}$: similar to $262\in I^4[17]$ argument
\item[$\diamond$] remaining elements of $I^4[22]:$ by Proposition \ref{theorem2.3.1 Hadi Thesis}.
\end{itemize}

\item $n=23$:
\begin{itemize}
\item[$\diamond$] $\{502\}$: similar to $262\in I^4[17]$ argument
\item[$\diamond$] remaining elements of $I^4[23]:$ by Technique \ref{tech1}.
\end{itemize}

\item $24 \leq n \leq 31$:
\begin{itemize}
\item[$\diamond$] for odd $n$, $I^4[n]:$ by Technique \ref{tech1}
\item[$\diamond$] for even $n$, $I^4[n]:$ by Proposition \ref{theorem2.3.1 Hadi Thesis}.
\end{itemize}

\item $n \geq 32$:
\begin{itemize}
\item[$\diamond$] for even $n$, $I^4[n]:$ by Corollary \ref{corollary: J: n->2n}.
\item[$\diamond$] for odd $n$, $I^4[n]:$ by Corollary \ref{corollary: J: n->2n+1}.
\end{itemize}

\end{itemize}

\subsection{Proofs of non-existence} \label{proofs of non-existence}
Here, we prove that certain intersection sizes are not possible for small
values of $n$. In most of the proofs,
we assume that a $4$-way latin square of corresponding intersection size exists.
Then by argument on the extendibility and completability of 'its trade'
to a $4$-way latin square of needed order, we reach a contradiction.

\begin {itemize}{
\item {\bf Proof of $2 \notin I^4[5]$}: 
Suppose $2 \in I^4[5]$. By Lemma~\ref{lemma2.2.3 Hadi Thesis}, we know that
each row (and column) has at most $1$ fixed cell. By a permutation on
rows and columns, we may assume that $(1,1)$ and $(2,2)$ are the fixed
cells. If these fixed cells have different elements, then there are
only $3$ elements left for the trade at $(1,2)$. Hence, they must have the
same element, say $1$. Now, it is easy to verify that there is not
enough room for $1$ in the third row as $1$ cannot appear in $(3,1)$ and $(3,2)$.
%
\item {\bf Proof of $9 \notin I^4[5]$ and $20 \notin I^4[6]$}: 
In each case, the volume of the trade is $16$ and since there are
at least four filled cells in each row and in each column of the trade,
and also since each element appears at least $4$ times in the trade, 
that is indeed a $4$-way latin square of order $4$. 
But, this trade cannot be extended to a $4$-way latin square of
order $5$ or $6$, as necessary condition for embedding a latin square of
order $m$ in a latin square of order $n$ is that $n\geq 2m$ or $m = 2n$.
\item {\bf Proof of $13,16 \notin I^4[6]$ and $22,24,25,26,29,33 \notin I^4[7]$}:
For these intersections, we produced all possible row sequences. Then, according to
Lemmata~\ref{lemma: sequence 4-7} and \ref{lemma: sequence 4-6},
we ruled out all of them.\\
(Note that we can similarly prove $9 \notin I^4[5]$ and $20 \notin I^4[6]$ by
means of Lemmata~\ref{lemma: sequence 4-6} and \ref{lemma: sequence 4-5}).
\item {\bf Proof of $9 \notin I^4[6]$}: Suppose $9 \in I^4[6]$. There
are at least four filled cells in each row and column of its trade.
So, its trade is $5\times 6$ or $6\times 6$.\\
$5\times 6$: There are two possible skeletons (cells containing
a dot correspond to empty cells of trade) for this trade. The first
skeleton is as below.\\
\def\arraystretch{.9}
\begin{center}
\hspace{1.5cm}
\begin{tabular}
{|@{\hspace{3pt}}c@{\hspace{3pt}}
|@{\hspace{3pt}}c@{\hspace{3pt}}
|@{\hspace{3pt}}c@{\hspace{3pt}}
|@{\hspace{6pt}}c@{\hspace{6pt}}
|@{\hspace{6pt}}c@{\hspace{6pt}}
|@{\hspace{6pt}}c@{\hspace{6pt}}|} \hline
$\bullet$ &\ &\ &\ &\ &\ \\ \hline
\ &$\bullet$ &\ &\ &\ &\ \\ \hline
\ &\ &$\bullet$ &\ &\ &\ \\ \hline
\ &\ &\ &\ &\ &\ \\ \hline
\ &\ &\ &\ &\ &\ \\ \hline
\end{tabular}
\end{center}
As $|X|\leq 6$ (since there is a row of six nonempty cells $|X|=6$)
and each element of $X$ should appear in at least
four rows we have either $|R_i\cap R_j|=4$ or $|R_i\cap R_j|=5$ for distinct $i,j \in \{1,2,3\}$. First we show $|R_i\cap R_j|=5$ cannot happen for any distinct $i,j \in \{1,2,3\}$. Suppose $|R_1\cap R_2|=5$. This means $R_1=R_2$. On the other hand by Corollary~\ref{corollary2.2.3 Hadi Thesis} we have $R_1 = C_2 \cup C_3$ and $|C_2 \cap C_3| = 3$ and similarly $R_2 = C_1 \cup C_3$ and $|C_1 \cap C_3| = 3$. Hence $R_1 = C_2 \cup C_3 = C_1 \cup C_3 = R_2$ which results to $C_1 = C_2$. But according to Corollary~\ref{corollary2.2.3 Hadi Thesis} with respect to $C_1$, $C_2$ and $R_3$ we have $|C_1 \cap C_2| = 3$ which is a contradiction. Therefore $|R_i\cap R_j|=4$ for distinct $i,j \in \{1,2,3\}$. Now consider
$x\in X\setminus R_3$. Clearly $x\in R_1$ and $x\in R_2$ (hence $x \in R_1\cap R_2$).
By Lemma~\ref{lemma2.2.3 Hadi Thesis} we have $C_3\subset R_1$ and
$C_3\subset R_2$ (hence $C_3 \subseteq R_1\cap R_2$). Since $|C_3|=4$ we have
$x\in C_3 = R_1\cap R_2$. This yields that cell $(3,3)$ of this trade
cannot be filled to get a $4$-way latin square of needed order.\\
The second possible skeleton is\\
\def\arraystretch{.9}
\begin{center}
\hspace{1.5cm}
\begin{tabular}
{|@{\hspace{3pt}}c@{\hspace{3pt}}
|@{\hspace{3pt}}c@{\hspace{3pt}}
|@{\hspace{3pt}}c@{\hspace{3pt}}
|@{\hspace{6pt}}c@{\hspace{6pt}}
|@{\hspace{6pt}}c@{\hspace{6pt}}
|@{\hspace{6pt}}c@{\hspace{6pt}}|} \hline
$\bullet$ &$\bullet$ &\ &\ &\ &\ \\ \hline
\ &\ &$\bullet$ &\ &\ &\ \\ \hline
\ &\ &\ &\ &\ &\ \\ \hline
\ &\ &\ &\ &\ &\ \\ \hline
\ &\ &\ &\ &\ &\ \\ \hline
\end{tabular}
\end{center}
As we have a row of six nonempty cells we have $X=\{1,2,\ldots,6\}$.
Suppose $6\notin X\setminus R_2$. To be able to fill the cell $(2,3)$
in order to obtain the needed $4$-way latin square, we should have $6\notin C_1,C_2,C_3$. But
this is a contradiction as $6$ should appear in at least $4$ columns of
this trade.\\
$6\times 6$: In this trade, there are at least three rows and three columns
with two empty cells. We can assume the first three rows and columns are so.
There is one row and one column with only one empty cell. By permutation,
assume that the last row and column are like this and hence cell $(6,6)$ is
empty. Suppose $R_6=\{1,2,\ldots,5\}$. We can assume, by
Corollary \ref{corollary2.2.3 Hadi Thesis}, that $C_1=\{1,2,4,5\},
C_2=\{1,2,3,4\}$ and $C_3=\{1,2,3,5\}$. According to
Corollary \ref{corollary2.2.2 Hadi Thesis}, we can assume that
$R_i=C_i$ for $i=1,2,3$ and hence by Corollary \ref{corollary2.2.3 Hadi Thesis}
we have $C_6=\{1,2,\ldots,5\}$. Each element should appear in at least four
columns, so $X=\{1,2,\ldots,5\}$. Now one can simply check that empty cells
of three first rows and columns cannot be filled in a latin way to obtain the
$4$-way latin square of needed order.
\item {\bf Proof of $20 \notin I^4[7]$}: 
Let ${\cal L}$ be a $4$-way latin square with intersection size $20$.
The row (or column) sequence of ${\cal L}$ can only contain $\{0, 1, 2, 3, 7\}$
(Lemma \ref{lemma:possible-in-seq}).
The row (or column) sequence cannot contain more than two $7$s as there
are only $20$ cells in the intersection part.
If it contains only one $7$, it cannot contain $3$ and
if it contains exactly two $7$s, it cannot contain $2$ or $3$ (Lemma \ref{lemma: sequence 4-7}).
In both cases, the maximum intersection size would be $19$ ($7 + 6 \times 2$ or $7 + 7 + 5 \times 1$).
Hence, the row sequence of ${\cal L}$ does not contain $7$.
Similarly, there can be no $1$s and there is exactly one $2$, and (up to symmetry)
the only valid row (and column) sequence is $\{a_i\} = \{3,3,3,3,3,3,2\}$. 
Without loss of generality, suppose that the first column
has intersection size $2$ and these intersections are the two top
cells of this column. At least one of the two first rows has
intersection size $3$. Suppose this row is the first one.
Furthermore, suppose that these $3$ intersections are located in
the first $3$ cells of this row. Applying Corollary
\ref{corollary2.2.2 Hadi Thesis}, we deduce that the same set of
numbers (Suppose $\{1,2,3\}$) appears in the intersection part in the last
four columns. So, there are $3$ numbers that appear at least four
times in ${\cal L}$. By Lemma \ref{lemma:possible-in-seq},
any of these numbers appears exactly $7$ times in the intersection part.
Hence, ${\cal L}$ has intersection size at least $7 \times
3 = 21 > 20$.

\item {\bf Proof of $37 \notin I^4[8], 54\notin I^4[9]$ and $73\notin I^4[10]$}: We prove
$54 \notin I^4[9]$ and the other ones are similar. Suppose $54 \in I^4[9]$. There
are at least four filled cells in each row and column of its trade. So, its trade is $5\times 6$
or $6\times 6$.\\
$5\times 6$: Each element of this trade should appear in at least four cells of it. So at most
six elements are in this trade. Since there is a row of six nonempty cells, we have
$X=\{1,2,\ldots,6\}$. To extend this trade to a $4$-way latin square of order 9, we should place
other three elements, $\{7,8,9\}$, in the added rows and empty cells. But this is
not possible.\\
$6\times 6$: The same as $6\times 6$ case of $9 \notin I^4[6]$ we can prove that $X=\{1,2,\ldots,5\}$.
But this trade cannot be extended to the desired $4$-way latin square using other four elements, $\{6,7,8,9\}$.
\item {\bf Proof of $39 \notin I^4[8]$ and $56\notin I^4[9]$}: We prove
$39 \notin I^4[8]$ and the other one is similar. Suppose $39 \in I^4[8]$. There
are at least four filled cells in each row and column of its trade. So, its
trade is $5\times 5$ or $5\times 6$ or $6\times 6$.\\
$5\times 5$: In this trade we have $|X|\leq 6$. So it cannot be extended
to a $4$-way latin square of order $8$.\\
$5\times 6$: In this trade we have $|X|\leq 6$ (one can show $|X|=5$). So it cannot be extended
to a $4$-way latin square of order $8$.\\
$6\times 6$: In this trade, there are five rows and five columns containing two empty
cells. There are one row and one column with a single empty cell. We can assume the last row
and column contain a single empty cell. If we show that $R_1=R_2=\cdots=R_5=C_1=C_2=\cdots=C_5$ then
we have a contradiction since $|X|\geq 5$ (last row has five nonempty cells).\\
Let $H=(A,B)$ be a bipartite graph constructed from this trade, as follows. Corresponding to each first five rows
we have a vertex in $A$ and corresponding to each first five columns we have a vertex in $B$.
Vertex $a\in A$ is adjacent to vertex $b\in B$ if and only if their correspondent rows and columns
have a nonempty cell in common.
As $|R_i|=|C_i|=4$ for $i\in \{1,2,\ldots,5\}$,
to prove $R_1=R_2=\cdots=R_5=C_1=C_2=\cdots=C_5$ it suffices to show the graph $H$ is connected.
To show this, consider that $H$ is a graph obtained from a $K_{5,5}$ by deleting
two perfect matchings from it. So, $H$ is a $3$-regular bipartite graph with five
vertices in each partition and hence connected.
%
\item {\bf Proof of $40 \notin I^4[8]$ and $57\notin I^4[9]$}: We prove
$40 \notin I^4[8]$ and the other one is similar. Suppose $40 \in I^4[8]$. There
are at least four filled cells in each row and column of its trade. So, its
trade is $5\times 5$ or $4\times 6$ or $5\times 6$ or $6\times 6$.\\
$5\times 5$: Each element of this trade should appear in at least four cells of it. So at most
six elements are in this trade. But this trade cannot be extended to the desired $4$-way latin square of
order $8$.\\
$4\times 6$: Since each element should appear in at least four cells of this trade we have
$X=\{1,2,\ldots,6\}$. So this trade cannot be extended to the desired $4$-way latin square of
order $8$ using the remaining two elements.\\
$5\times 6$: This trade has at least one row with two empty cells. Suppose the first row
has two empty cells. Each column has an empty cell. We can assume the last
two cells of first row are empty. So by Corollary \ref{corollary2.2.2 Hadi Thesis}
we have $R_1=C_1=C_2=C_3=C_4$. This yields that $X=R_1$. Therefore, this trade
cannot be extended to the desired $4$-way latin square of order $8$ using the remaining four elements.\\
$6\times 6$: Each row and column has two nonempty cells. Similar to $6\times 6$ case of
$39 \notin I^4[8]$ we can show that $X=\{1,2,3,4\}$. And hence this trade
cannot be extended to the desired $4$-way latin square of order $8$ using the remaining four elements.
\item {\bf Proof of $41 \notin I^4[8]$ and $58\notin I^4[9]$}: We prove
$41 \notin I^4[8]$ and the other one is similar. Suppose $41 \in I^4[8]$. There
are at least four filled cells in each row and column of its trade. So, its
trade is $5\times 5$. It has two empty cells which are not in a common row or column.
As $|X|=5$, this trade is not extendible to the desired $4$-way latin square.
\item {\bf Proof of $44 \notin I^4[8]$}: There are at least four filled
cells in each row and column of its trade. So, its trade is $5\times 5$ or $4\times 5$.\\
$4\times 5$: One can simply see that $|X|=5$. So, this trade is not extendible to the desired $4$-way latin square.\\
$5\times 5$: Each row and column contains an empty cell. Similar to $6\times 6$ case of
$39 \notin I^4[8]$ we can show that $X=\{1,2,3,4\}$. Hence this trade is not extendible
to the desired $4$-way latin square.
}\end{itemize}

The results given above follow  the following (main) theorem. Note that 
$R^4[n]$ denotes undecided values.


\begin{theorem} {\rm\bf (MAIN)}
We have,

\begin{itemize}
\item $I^4[n] = J^4[n]$ for $1 \le   n   \leq 6$,

%
%
%
%
%

\item $I^4[7] \supseteq [0,17] \cup \{19,21,49\}$,
\noindent $R^4[7]=\{18\}$,

\item $I^4[8] \supseteq J^4[8]\setminus (R^4[8]\cup \{35,37,39,40,41,44\})$,
\noindent $R^4[8]=\{26,27,29,30,31,34\}$

\item $I^4[9] \supseteq J^4[9]\setminus (R^4[9]\cup \{54,56,57,58\})$,\\
\noindent $R^4[9]=\{19,24,28,33,34,35,39,42,43,44,47,48,49,50,51,52,53\}$,

\item $I^4[10] \supseteq J^4[10]\setminus (R^4[10]\cup \{73\})$,
\noindent  $R^4[10]=\{9,13,17,18,19,21,22,23,29,33,37,38,39,41,42,$
\noindent $43,47,48,49,53,57,58,59,61,62,63,64,65,66,67,68,69,70,71,72\}$,

\item $I^4[11] \supseteq J^4[11]\setminus R^4[11]$,\\
\noindent $R^4[11]=\{74,75,78,80,81,82,83,84,85,86,87,88,89,90,92,93,94,98\}$,

\item $I^4[12] \supseteq J^4[12]\setminus R^4[12]$,
\noindent $R^4[12]=\{93,97,98,99,100,101,102,103,104,105,106,107\}$,

\item $I^4[13] \supseteq J^4[13]\setminus R^4[13]$,
\noindent $R^4[13]=\{118,119,121,122,123,124,125,126,130,131,132,142\}$,

\item $I^4[14] \supseteq J^4[14]\setminus R^4[14]$,
\noindent $R^4[14]=\{137,139,141,142,143,144,145\}$,

\item $I^4[15] \supseteq J^4[15]\setminus R^4[15]$,
\noindent $R^4[15]=\{162,164,166,167,168,172\}$, and

\item $I^4[n] = J^4[n]$ for any $n\geq 16$.
\end{itemize}
\end{theorem}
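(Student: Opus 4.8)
The plan is to prove the theorem in four layers: a uniform upper bound, explicit constructions realizing the claimed members, structural non-existence arguments for the forbidden sizes, and finally a doubling induction that lifts finitely many settled orders to all $n\geq 16$. Throughout, the two directions $I^4[n]\subseteq J^4[n]$ and the reverse are kept separate. The upper bound is essentially free: the trade part of any $4$-way latin square is a $4$-way latin trade, so its volume $n^2-k$ must be an admissible trade volume, and the classification of $4$-way trade volumes in \cite{MR1904722} forces $k\in J^4[n]$ for every $n$. This already supplies the ``$\subseteq$'' half of every equality and the ambient containment in every ``$\supseteq$'' statement.

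Second, for each order $n$ with $4\leq n\leq 31$ I would certify membership of every claimed $k$ by exhibiting a concrete object. Full trades of volume $n^2-k$ at the multiples of $n$ come from Lemma~\ref{lemma2.2.2 Hadi Thesis}; long intervals of intersection sizes are swept out by the four constructions of Section~\ref{construction} (Techniques~\ref{tech1}, \ref{tech3}, \ref{tech4} together with the $n\to 2n+1$ Lemma~\ref{theorem2.3.2 Hadi Thesis}); composite orders are filled in by the doubling Proposition~\ref{theorem2.3.1 Hadi Thesis}; and the residual sporadic sizes are caught by computer searches and by the explicit latin rectangles recorded in the Appendix. The key bookkeeping is to verify that, on the block $16\leq n\leq 31$, these constructions \emph{jointly} realize all of $J^4[n]$, so that $I^4[n]=J^4[n]$ holds there without gaps.

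Third, for the small orders $5\leq n\leq 15$ I would establish the non-existence claims that cut $R^4[n]$ and the extra forbidden sizes out of $J^4[n]$. The uniform scheme is: assume a $4$-way latin square of the forbidden size exists, pass to its trade, and contradict the forced structure. The admissible row/column sequences are pinned down by Lemmas~\ref{lemma:possible-in-seq}, \ref{lemma: sequence 4-7}, \ref{lemma: sequence 4-6}, \ref{lemma: sequence 4-5}; within a fixed skeleton the sets $R_i,C_j$ are forced by Lemma~\ref{lemma2.2.3 Hadi Thesis} and Corollaries~\ref{corollary2.2.2 Hadi Thesis}, \ref{corollary2.2.3 Hadi Thesis}; and the resulting small trade is then shown to violate the embedding bound $n\geq 2m$ for containing a latin square of order $m$. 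Enumerating the few admissible skeletons and ruling each out is the most delicate, case-heavy part of the argument.

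Finally, I would run the induction for $n\geq 32$. With $I^4[m]=J^4[m]$ already secured for $16\leq m\leq 31$, an even $n=2m$ is settled by Corollary~\ref{corollary: J: n->2n} and an odd $n=2m+1$ by Corollary~\ref{corollary: J: n->2n+1}, each step turning a settled order into a settled order of roughly twice the size; the density hypotheses hold because $I^4[m]=J^4[m]$ realizes every admissible size up to $m^2$, and for $m\geq 16$ the finitely many universally excluded sizes all lie well below the thresholds $\lceil m^2/2\rceil$ and $7m+4$. The main obstacle I anticipate is not the induction itself, which is clean, but guaranteeing complete coverage of $J^4[n]$ on the base block $16\leq n\leq 31$: the technique parameters must be tuned so their output intervals overlap and leave no gap, with any stubborn residual value caught by a direct search or an ad hoc rectangle.
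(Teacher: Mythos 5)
Your plan reproduces the paper's own proof architecture essentially verbatim: the containment $I^4[n]\subseteq J^4[n]$ from the classification of $4$-way trade volumes, case-by-case constructions (Lemma~\ref{lemma2.2.2 Hadi Thesis}, Techniques~\ref{tech1}--\ref{tech4}, Proposition~\ref{theorem2.3.1 Hadi Thesis}, computer search, Appendix rectangles) for $4\le n\le 31$, the skeleton/row-sequence non-existence arguments for the excluded sizes at small orders, and the doubling Corollaries~\ref{corollary: J: n->2n} and~\ref{corollary: J: n->2n+1} for $n\ge 32$. This is the same approach, correctly identifying that the real work lies in the gap-free coverage of $J^4[n]$ on the base block $16\le n\le 31$ and in the case-heavy non-existence analysis.
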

\section{Conclusion}
The so-called intersection problem has been considered for many different combinatorial structures,
including latin squares. This intersection problem basically takes a pair of structures,
with the same parameters and based on the same underlying set, and determines the possible
number of common sub-objects which they may have (such as blocks, entries, etc.).
The intersection problem has also been extended from consideration of pairs of
combinatorial structures to sets of three, or even sets of $\mu$, where $\mu$ may be
larger than $3$. In this paper, we studied the problem of determining,
for all orders $n$, the set of integers $k$ for which there exists $4$ latin squares
of order $n$ having precisely $k$ identical cells, with their remaining $n^2 - k$ cells
different in all four latin squares, denoted by $I^4[n]$.

We have completely determined $I^4[n]$ for $n \geq 16$ 
but there are still undecided values for $n \leq 15$.
The smallest undecided question is whether $18 \in I^4[7]$.
If the answer to this question is yes, it can be concluded that $137 \in I^4[14]$ and
$162 \in I^4[15]$ using Proposition \ref{theorem2.3.1 Hadi Thesis}
and Technique \ref{theorem2.3.2 Hadi Thesis}. All other undecided values 
($R^4[x], 8 \leq x \leq 15$, section \ref{sec: main-results})
should be answered directly and can not be solved
(at least with) techniques discussed in this paper.

\section*{Acknowledgements}
Most of the computerized search for latin squares for this work were run on the computer site 
of the Department of the Mathematical Sciences, Sharif University of Technology, during holidays. 
We thank the department and specially Ms. Maryam Sadeghian, the site administrator, for her assistance. 

We are very grateful to anonymous referee for taking the time and effort necessary to
provide such insightful comments.


\normalsize
\bibliographystyle{plain}
\bibliography{sr334MahNew}
\section*{Appendix \label{appendix}}
{ In each of the following, the top two numbers in each row
indicate the order of latin square and the achieved intersection
number, respectively. Elements of the set
$\{1,2,\ldots,9,a,b,c,\ldots,z\}$ are used
as entries.\\
\noindent Latin rectangles containing underlined entries, are the
input needed for Techniques \ref{tech3} and \ref{tech4} where the
underlined entries show fixed cells. Fixed or permuting parts are
separated by double lines.

\newpage

\textwidth = 15 cm
\textheight = 23 cm
\oddsidemargin = -2 cm
\evensidemargin = 0 cm
\topmargin = -1 cm
\parskip = 1.5 mm

\def\mfour#1#2#3#4{\raise .02ex\hbox{%
    $#1_{\displaystyle #2}#3_{\displaystyle #4}$}}

\def\mfour#1#2#3#4{\hbox{
     ${#1_{{\displaystyle #2}}#3_{{\displaystyle #4}}}$}}

\def\mfour#1#2#3#4{\raise 1ex\hbox{${#1_{{\displaystyle #2}_{{\displaystyle #3}_{\displaystyle #4}}}}$}}

\def\un{\underline}

\def\arraystretch{1.4}                 

\begin{center}

\footnotesize {



}

\end{center}
}
%
\end{document}